\documentclass[12pt]{amsart}
\usepackage{graphicx}
\usepackage[active]{srcltx}
\usepackage{amsthm,mathrsfs}
\usepackage{a4wide}
\usepackage{amsfonts}
\usepackage{amsmath}
\usepackage{amssymb}
\usepackage{dsfont}
\newtheorem{lemma}{Lemma}
\newtheorem{theorem}{Theorem}
\newtheorem{corollary}{Corollary}

\usepackage{float}
\usepackage{url}
\usepackage{enumerate}
\usepackage{enumerate}
\usepackage{enumitem}

\DeclareMathOperator {\vart} {Var_{total}}
\DeclareMathOperator {\varhk} {Var_{HK}}
\DeclareMathOperator {\varhkn} {Var_{HK\mathbf{0}}}
\newcommand {\ve} {\varepsilon}

\makeatletter\def\blfootnote{\xdef\@thefnmark{}\@footnotetext}\makeatother
\allowdisplaybreaks
\parindent0mm


\title[Bounded variation, signed measures, Koksma--Hlawka inequality]{\bf Functions of bounded variation, signed measures, and a general Koksma--Hlawka inequality}

\author{Christoph Aistleitner} 
\address{Department of Mathematics and Statistics, Graduate School of Science, Kobe University, 1-1 Rokkodai, Nada-ku, Kobe 657-8501, Japan }
\email{aistleitner@math.tugraz.at}

\author{Josef Dick} 
\address{School of Mathematics and Statistics, University of New South Wales, Sydney NSW 2052, Australia}
\email{josef.dick@unsw.edu.au}

\thanks{The first author is supported by a Schr\"odinger scholarship of the Austrian Research
Foundation (FWF). The second author is supported by an Australian Research Council Queen Elizabeth 2 Fellowship.}

\subjclass[2010]{26B30, 65D30, 65C05, 11K38}

\begin{document}

\begin{abstract}
In this paper we prove a correspondence principle between multivariate functions of bounded variation in the sense of Hardy and Krause and signed measures of finite total variation, which allows us to obtain a simple proof of a generalized Koksma--Hlawka inequality for non-uniform measures. Applications of this inequality to importance sampling in Quasi-Monte Carlo integration and tractability theory are given. Furthermore, we discuss the problem of transforming a low-discrepancy sequence with respect to the uniform measure into a sequence with low discrepancy with respect to a general measure $\mu$, and show the limitations of a method suggested by Chelson.
\end{abstract}

\date{}
\maketitle

\section{Introduction} \label{sec1}

Let $\mathbf{x}_1, \dots, \mathbf{x}_N$ be a set of points in the $d$-dimensional unit cube $[0,1]^d$. The \emph{star-discrepancy} $D_N^*$ of this point set is defined as
\begin{equation} \label{disc}
D_N^*(\mathbf{x}_1, \dots, \mathbf{x}_N) = \sup_{A \in \mathcal{A}^*} \left| \frac{1}{N} \sum_{n=1}^N \mathds{1}_{A} (\mathbf{x}_n) - \lambda(A)\right|.
\end{equation}
Here, and in the sequel, we write $\mathds{1}_A$ for the indicator function of a set $A$, $\lambda$ for the ($d$-dimensional) Lebesgue measure and $\mathcal{A}^*$ for the class of all closed axis-parallel boxes contained in $[0,1]^d$ which have one vertex at the origin. We generally write vectors in bold font. For vectors $\mathbf{a},\mathbf{b}$ we write $\mathbf{a}\leq \mathbf{b}$ and $\mathbf{a}<\mathbf{b}$ if the respective inequalities hold in each coordinate, and we write $[\mathbf{a},\mathbf{b}]$ for the set $\{\mathbf{x}:~\mathbf{a} \leq \mathbf{x} \leq \mathbf{b}\}$. We write $\mathbf{0}$ and $\mathbf{1}$ for the $d$-dimensional vectors $(0,\dots,0)$ and $(1,\dots,1)$, respectively. Since the star-discrepancy $D_N^*$ is the only discrepancy mentioned in this paper, we will use the word ``discrepancy'' synonymously with ``star-discrepancy''.\\

The \emph{Koksma--Hlawka inequality} states that for any function $f$ on $[0,1]^d$ which has bounded variation in the sense of Hardy and Krause and any point set $\mathbf{x}_1, \dots, \mathbf{x}_N \in [0,1]^d$ we have
\begin{equation} \label{kh}
\left|\frac{1}{N} \sum_{n=1}^N f(\mathbf{x}_n) - \int_{[0,1]^d} f(\mathbf{x}) ~d\mathbf{x}\right| \leq \left( \varhk f \right) D_N^*(\mathbf{x}_1, \dots, \mathbf{x}_N).
\end{equation}
A definition of the variation in the sense of Hardy and Krause (denoted by $\varhk$ and abbreviated as \emph{HK-variation} in this paper) is given in Section~\ref{secdef} below. More precisely, by $\varhk$ and HK-variation we mean the variation in the sense of Hardy and Krause \emph{anchored at $\mathbf{1}$} (later in this paper the HK-variation anchored at $\mathbf{0}$ will also play a role; however, the HK-variation anchored at $\mathbf{1}$ is the ``usual'' HK-variation). The one-dimensional version of the inequality~\eqref{kh} was first proved by Koksma~\cite{koksma} in 1942, the multidimensional generalization by Hlawka~\cite{hlawka} in 1961.\\

The Koksma--Hlawka inequality suggests that point sets having small discrepancy can be used for the approximation of the integral of a multivariate function - this observation is one of the cornerstones of the \emph{Quasi-Monte Carlo method} (QMC method) for numerical integration, which uses cleverly designed deterministic points as sampling points of a quadrature rule (as opposed to the \emph{Monte Carlo method}, where randomly generated points are used). Since there exist several constructions of point sets $\mathbf{x}_1,\dots,\mathbf{x}_N$ in $[0,1]^d$ which achieve a discrepancy bounded by
\begin{equation} \label{lowd}
D_N^*(\mathbf{x}_1,\dots,\mathbf{x}_N) \leq c_d (\log N)^{d-1} N^{-1},
\end{equation}
for large $N$ the error estimates in QMC integration can be much better than the (randomized) error of the \emph{Monte Carlo method} (MC method), which is of order $N^{-1/2}$. More information on discrepancy in the context of the previous paragraphs can be found in the monographs~\cite{dpd,dts,kn}. Discrepancy theory in a more general context (geometric, combinatorial, etc.) is described in~\cite{chaz,mat}. A comparison between MC and QMC methods can be found in~\cite{lem}.\\

The QMC method is widely applied to numerical integration problems, for example to the problem of option pricing in financial mathematics. The general idea is that the problem of calculating the expected value of a multidimensional random variable or the problem of calculating an integral over a general domain $\Omega \subset \mathbb{R}^d$ with respect to a general measure $\mu$ can be transformed into an integration problem with respect to the uniform measure on $[0,1]^d$. However, this transformation process is not without its pitfalls, see~\cite{wangtan1,wangtan2}. This is a particularly critical issue as the Koksma--Hlawka inequality is extremely sensitive with respect to the smallest changes of the function $f$, since the slightest deformation can turn a function having small HK-variation into a function of infinite HK-variation. For example, if a smooth function $f$ on a general integration domain $\Omega\subset [0,1]^d$ is simply embedded into $[0,1]^d$ by defining $f(\mathbf{x})=0$ on $[0,1]^d \backslash \Omega$, then clearly $\int_\Omega f(\mathbf{x})~d\mathbf{x} = \int_{[0,1]^d} f(\mathbf{x})~d\mathbf{x}$ but the Koksma--Hlawka inequality is not applicable since the extended function is in general not of bounded HK-variation (unless, roughly speaking, $\Omega$ itself is an axis-parallel box). Similar problems appear if one tries to switch from an integral with respect to a general measure to an integral with respect to the uniform measure. Consequently, it is desirable to find a variant of QMC integration which is directly applicable to integration with respect to general measures (this includes the case of general domains $\Omega \subset [0,1]^d$, by taking a measure which is only supported on $\Omega$). Another motivation for studying such general problems comes from the fact that they are closely related to importance sampling for QMC; see Corollary~\ref{th3} below.\\

Let $\mu$ be a normalized Borel measure\footnote{Throughout this paper we understand that a \emph{measure} is always non-negative, while a \emph{signed measure} may also have negative values.} on $[0,1]^d$. The star-discrepancy with respect to $\mu$ of a point set $\mathbf{x}_1,\dots,\mathbf{x}_N \in[0,1]^d$ is defined as
\begin{equation} \label{discmu}
D_N^*(\mathbf{x}_1, \dots, \mathbf{x}_N;\mu) = \sup_{A \in \mathcal{A}^*} \left| \frac{1}{N} \sum_{n=1}^N \mathds{1}_{A} (\mathbf{x}_n) - \mu(A)\right|.
\end{equation}
Improving results of Beck~\cite{beck}, the authors of the present paper recently showed that for any $\mu$ and any $N$ there exists a point set $\mathbf{x}_1,\dots,\mathbf{x}_N \in[0,1]^d$ for which
\begin{equation} \label{lowd2}
D_N^*(\mathbf{x}_1, \dots, \mathbf{x}_N;\mu) \leq c_d (\log N)^{(3d+1)/2} N^{-1};
\end{equation}
see~\cite{ad}. There is a gap between this upper bound and that for the uniform measure in~\eqref{lowd}, and it is an interesting open problem whether the smallest possible discrepancy with respect to general measures $\mu$ is asymptotically of the same order as the smallest possible discrepancy with respect to the uniform measure. It should be noted that it is also unknown whether~\eqref{lowd} is optimal or if the exponent of the logarithmic term can be further reduced; this is known as the \emph{Grand Open Problem} of discrepancy theory (see~\cite{bil1,bil2}).\\

To show that QMC integration is in principle also possible with respect to general measures, the estimate~\eqref{lowd2} is not sufficient. Additionally one needs a generalized Koksma--Hlawka inequality for non-uniform measures, which is given in Theorem~\ref{th1} below.

\begin{theorem} \label{th1}
Let $f$ be a measurable\footnote{We use the word ``measurable'' in the sense of ``Borel-measurable'', that is in the sense of ``measurable with respect to Borel sets''. It is possible that a function which has bounded HK-variation is always Borel-measurable as well, and that the assumption of $f$ being measurable can be omitted in the statement of the theorem. However, we have not found any evidence of the assertion that bounded HK-variation implies Borel-measurability in the literature.} function on $[0,1]^d$ which has bounded HK-variation. Let $\mu$ be a normalized Borel measure on $[0,1]^d$, and let $\mathbf{x}_1, \dots, \mathbf{x}_N$ be a set of points in $[0,1]^d$. Then
\begin{equation} \label{th1equ}
\left| \frac{1}{N} \sum_{n=1}^N f(\mathbf{x}_n) - \int_{[0,1]^d} f(\mathbf{x}) ~d\mu(\mathbf{x})\right| \leq \left(\varhk f\right) D_N^*(\mathbf{x}_1, \dots, \mathbf{x}_N;\mu).
\end{equation}
\end{theorem}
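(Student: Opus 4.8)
The plan is to establish the inequality \eqref{th1equ} by reducing it to the classical uniform case via the correspondence principle between HK-variation and signed measures that the abstract promises. The central idea is that a function $f$ of bounded HK-variation should admit a representation in terms of its associated signed measure, after which both the sum $\frac{1}{N}\sum_{n=1}^N f(\mathbf{x}_n)$ and the integral $\int_{[0,1]^d} f\,d\mu$ can be rewritten as integrals of indicator functions of boxes against that signed measure, and the definition of $D_N^*(\cdot;\mu)$ in \eqref{discmu} can be inserted directly.

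Concretely, I would first invoke the correspondence principle (established earlier in the paper) to write, for $\mathbf{x} \in [0,1]^d$, a representation of the form $f(\mathbf{x}) = f(\mathbf{1}) + \int_{[0,1]^d} \mathds{1}_{[\mathbf{0},\mathbf{x}]}(\mathbf{t})\,d\nu_f(\mathbf{t})$ (up to the appropriate anchoring and sign conventions), where $\nu_f$ is the signed measure associated with $f$ whose total variation equals $\varhk f$. Because the constant $f(\mathbf{1})$ contributes equally to the empirical average and to the $\mu$-integral (since $\mu$ is normalized), it cancels in the difference on the left-hand side of \eqref{th1equ}, so it suffices to control the contribution of the integral term.

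The second step is to substitute this representation into both terms and exchange the order of summation/integration over the points $\mathbf{x}_n$ with the integration against $\nu_f$, and likewise exchange the $\mu$-integration with the $\nu_f$-integration (Fubini for signed measures of finite total variation). This turns the left-hand side of \eqref{th1equ} into
\begin{equation*}
\left| \int_{[0,1]^d} \left( \frac{1}{N}\sum_{n=1}^N \mathds{1}_{[\mathbf{0},\mathbf{x}_n]}(\mathbf{t}) - \mu\bigl([\mathbf{t},\mathbf{1}]\bigr) \right) d\nu_f(\mathbf{t}) \right|,
\end{equation*}
where the inner bracket is, up to reflection of the box (swapping the anchor between $\mathbf{0}$ and $\mathbf{1}$), exactly a signed local discrepancy of the form appearing inside the supremum in \eqref{discmu}. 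I would then bound the inner expression in absolute value by $D_N^*(\mathbf{x}_1,\dots,\mathbf{x}_N;\mu)$ uniformly in $\mathbf{t}$, pull that bound out of the integral, and bound the remaining $\int_{[0,1]^d} d|\nu_f|(\mathbf{t})$ by $\varhk f$ via the correspondence principle, giving exactly \eqref{th1equ}.

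The main obstacle I expect is making the correspondence principle apply cleanly with the correct anchoring and box-orientation, so that the indicator $\mathds{1}_{[\mathbf{0},\mathbf{x}_n]}(\mathbf{t})$ that arises from the representation of $f$ matches, after reflecting coordinates, the box $A \in \mathcal{A}^*$ (anchored at the origin) used in the definition of $D_N^*(\cdot;\mu)$; the bookkeeping of lower-dimensional faces and of the HK-variation restricted to boundary components must be handled so that the total-variation bound $\int d|\nu_f| \le \varhk f$ holds with the \emph{anchored-at-$\mathbf{1}$} variation rather than some larger quantity. A secondary technical point is justifying the interchange of integration and the finite sum/$\mu$-integral against the signed measure $\nu_f$, which requires only finiteness of the total variation of $\nu_f$ together with measurability of $f$ (the hypothesis explicitly assumed in the theorem), so Fubini's theorem for finite signed measures applies without difficulty.
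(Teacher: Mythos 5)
Your outline coincides, in its essentials, with the paper's proof of the \emph{left-continuous} case: normalize so that $f(\mathbf{1})=0$, pass to $g(\mathbf{x})=f(\mathbf{1}-\mathbf{x})$ and the reflected measure $\hat{\nu}(A)=\nu(\mathbf{1}-A)$, which is exactly the device that resolves both of the ``obstacles'' you flag at the end --- after reflection the indicators that arise are of the form $\mathds{1}_{[\mathbf{0},\mathbf{y}]}(\mathbf{x}_n)$, i.e.\ over boxes in $\mathcal{A}^*$ as required by \eqref{discmu}, and by \eqref{fgrel} the total variation of the reflected signed measure is $\varhkn g = \varhk f$ (the anchored-at-$\mathbf{1}$ variation, not the possibly much larger $\varhkn f$). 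The Fubini step and the pointwise bound by $D_N^*(\mathbf{x}_1,\dots,\mathbf{x}_N;\mu)$ then go through exactly as you describe. So for one-sided-continuous $f$ your proposal is correct and is the paper's argument.

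The genuine gap is the continuity hypothesis itself. The correspondence principle (Theorem~\ref{thbv}(a)) applies only to \emph{right-continuous} functions of bounded HK-variation: a function of the form $\mathbf{x}\mapsto\nu([\mathbf{0},\mathbf{x}])$ is automatically coordinatewise right-continuous, so the representation $f(\mathbf{x})=f(\mathbf{1})+\int\mathds{1}_{[\mathbf{0},\mathbf{x}]}(\mathbf{t})\,d\nu_f(\mathbf{t})$ from which you start simply does not exist for a general $f$ of bounded HK-variation, and your proposal never addresses this. The reduction is not routine bookkeeping: one cannot just replace $f$ by a one-sided-continuous modification, since the sample points $\mathbf{x}_n$ may sit precisely at discontinuities of $f$ (changing $\frac{1}{N}\sum_n f(\mathbf{x}_n)$), and if $\mu$ has atoms the modification can also change $\int f\,d\mu$. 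The paper spends the entire second half of its proof on this: using the strong law of large numbers and the multivariate Glivenko--Cantelli theorem it chooses auxiliary points $\mathbf{y}_1,\dots,\mathbf{y}_M$ with $D_M^*(\mathbf{y}_1,\dots,\mathbf{y}_M;\mu)\leq\ve$ whose $f$-average is within $\ve$ of $\int f\,d\mu$, forms the finite grid generated by $\mathbf{0},\mathbf{1}$, the $\mathbf{x}_n$ and the $\mathbf{y}_m$, and sets $\tilde{f}(\mathbf{x})=f(\textup{succ}(\mathbf{x}))$, which is left-continuous, agrees with $f$ at every $\mathbf{x}_n$ and $\mathbf{y}_m$, and satisfies $\varhk \tilde{f}\leq\varhk f$; a three-term triangle inequality, applying the already-proved continuous case twice (once to the $\mathbf{x}_n$, once to the $\mathbf{y}_m$), then yields the theorem as $\ve\to 0$. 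Without an argument of this kind your proof establishes the theorem only for left-continuous $f$, not in the stated generality.
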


Theorem~\ref{th1} directly implies the following result, which shows that the method of importance sampling can be used for Quasi-Monte Carlo integration. 
\begin{corollary} \label{th3}
Let $f$ be a measurable function on $[0,1]^d$, and let $g$ be the density of a normalized Borel measure $\mu_g$ on $[0,1]^d$.  Assume further that $f/g$ has bounded HK-variation, and that $g(\mathbf{x})>0$ for all $\mathbf{x} \in [0,1]^d$. Let $\mathbf{x}_1, \dots, \mathbf{x}_N$ be a set of points in $[0,1]^d$. Then
\begin{equation} \label{th3equ}
\left| \frac{1}{N} \sum_{n=1}^N \frac{f(\mathbf{x}_n)}{g(\mathbf{x}_n)} - \int_{[0,1]^d} f(\mathbf{x}) ~d\mathbf{x} \right| \leq \left(\varhk \left(\frac{f}{g}\right)\right) D_N^*(\mathbf{x}_1, \dots, \mathbf{x}_N;\mu_g).
\end{equation}
\end{corollary}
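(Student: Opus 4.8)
The plan is to apply Theorem~\ref{th1} directly to the function $h := f/g$ and the measure $\mu := \mu_g$. First I would verify that $h$ satisfies the hypotheses of Theorem~\ref{th1}: by assumption $f/g$ has bounded HK-variation, and since $f$ is measurable, $g$ is measurable (being a density of a Borel measure) and $g(\mathbf{x})>0$ for every $\mathbf{x}$, the quotient $h=f/g$ is again measurable. Thus Theorem~\ref{th1} is applicable to $h$ and $\mu_g$, and it yields
\begin{equation*}
\left| \frac{1}{N} \sum_{n=1}^N h(\mathbf{x}_n) - \int_{[0,1]^d} h(\mathbf{x}) ~d\mu_g(\mathbf{x})\right| \leq \left(\varhk h\right) D_N^*(\mathbf{x}_1, \dots, \mathbf{x}_N;\mu_g).
\end{equation*}

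The second step is to identify the two terms on the left-hand side with those appearing in~\eqref{th3equ}. The empirical average is immediate, since $h(\mathbf{x}_n) = f(\mathbf{x}_n)/g(\mathbf{x}_n)$. For the integral I would use that $g$ is by definition the density of $\mu_g$ with respect to Lebesgue measure, i.e.\ $d\mu_g(\mathbf{x}) = g(\mathbf{x})~d\mathbf{x}$, so that
\begin{equation*}
\int_{[0,1]^d} h(\mathbf{x}) ~d\mu_g(\mathbf{x}) = \int_{[0,1]^d} \frac{f(\mathbf{x})}{g(\mathbf{x})} \, g(\mathbf{x}) ~d\mathbf{x} = \int_{[0,1]^d} f(\mathbf{x}) ~d\mathbf{x},
\end{equation*}
where the pointwise cancellation is justified by the hypothesis $g(\mathbf{x})>0$ for all $\mathbf{x} \in [0,1]^d$. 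Substituting these two identities together with $\varhk h = \varhk(f/g)$ into the inequality above gives precisely~\eqref{th3equ}.

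Since the argument is a direct specialization of Theorem~\ref{th1}, there is no genuine obstacle to overcome. The only two points requiring care are the measurability of the quotient $f/g$, which is needed in order to invoke Theorem~\ref{th1} and which follows from the measurability of $f$ and $g$ together with the strict positivity of $g$, and the validity of the pointwise cancellation $g/g=1$ in the integral, which is likewise guaranteed by the positivity assumption on $g$.
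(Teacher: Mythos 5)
Your proposal is correct and matches the paper's own argument: the paper derives Corollary~\ref{th3} precisely as a direct specialization of Theorem~\ref{th1} to the function $f/g$ and the measure $\mu_g$, using $d\mu_g(\mathbf{x}) = g(\mathbf{x})\,d\mathbf{x}$ to rewrite $\int_{[0,1]^d} (f/g)\,d\mu_g$ as $\int_{[0,1]^d} f(\mathbf{x})\,d\mathbf{x}$. Your added care about the measurability of the quotient and the pointwise cancellation justified by $g>0$ is a reasonable, if routine, elaboration of what the paper leaves implicit.
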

The idea of importance sampling is to find a function $g$ for which the HK-variation of $f/g$ is significantly smaller than that of $f$; in this case the error bound in~\eqref{th3equ} can be much better than that in the standard Koksma--Hlawka inequality~\eqref{kh}.\\

Corollary~\ref{th3} was obtained by Chelson~\cite{chelson} in his PhD thesis, which was published in 1976; it is stated there with the incorrect conclusion that on the right-hand side of~\eqref{th3equ} one can take the discrepancy with respect to the \emph{uniform} measure of a point set which is related to $\mathbf{x}_1, \dots, \mathbf{x}_N$ by a simple transformation, instead of the discrepancy of $\mathbf{x}_1, \dots, \mathbf{x}_N$ with respect to the measure induced by $g$. Chelson's result and its correct and incorrect parts are described in detail in Section~\ref{sectrans}. Chelson's result is formulated in the language of Corollary~\ref{th3}, which can only be sensibly stated with the assumption that $g$ is the density of a measure; accordingly, a variant of Theorem~\ref{th1} can only be deduced from Chelson's formulation with the additional assumption that $\mu$ possesses a density, and not for general~$\mu$.\\

Because of the issues mentioned in the previous paragraph, the main impulse for writing this paper was to discuss Chelson's result and method, and to give a correct proof of Theorem~\ref{th1} without assuming that the measure $\mu$ possesses a density. However, when the present manuscript was almost finished, we coincidentally found out that such a result had already been obtained by G\"otz~\cite{gotz}. G\"otz's paper was published in 2002, but apparently it was almost completely overlooked until now; we only found it casually noted in a short survey article of Niederreiter~\cite{nied}. Apparently G\"otz did not know of Chelson's result. It should be noted that despite the remarks concerning Chelson's result in the previous paragraph, his method of proof is in principle correct, and could be modified to give a correct proof of Corollary~\ref{th3}. Chelson's and G\"otz's results are both proved using the same method which is usually used for proving the standard Koksma--Hlawka inequality, namely Abel 
partial 
summation. Our proof for Theorem~\ref{th1} is simpler; it can be seen as an application of a partial integration formula for the Stieltjes integral, which, however, is nothing other than the continuous analogue of the Abel summation formula. The proof is based on a correspondence principle between functions of bounded HK-variation and signed measures of finite total variation, which is of some interest in its own right (see Theorem~\ref{thbv} below). Together with recent new results on the existence of low-discrepancy point sets with respect to general measures $\mu$, Theorem~\ref{th1} implies strong convergence results for QMC integration with respect to general measures (see Corollaries~\ref{co1} and~\ref{co2} below).\\

A result somewhat similar to Theorem~\ref{th1} in the case when the measure $\mu$ is the uniform measure on the unit simplex was obtained in~\cite{basu,pill,pill2}. A result similar to Theorem~\ref{th1} in the special case when $\mu$ is the uniform measure on a set $\Omega \subset [0,1]^d$ (or, more generally, for bounded $\Omega \subset \mathbb{R}^d$) has been obtained recently by Brandolini \emph{et al.}~\cite{brand}; however, their error estimate contains multiplicative factors which depend exponentially on the dimension, and which accordingly spoil all tractability results (the case of star-discrepancy and HK-variation is the special case $p=1$ and $q=\infty$ of the more general result in their paper). Another result of Brandolini \emph{et al.}~\cite{brand2} gives a general Koksma--Hlawka inequality for the uniform measure on compact parallelepipeds or simplices.\\

The following theorem establishes the existence of the Jordan decomposition of a multivariate function of bounded HK-variation. It is a generalization of the well-known Jordan decomposition theorem for functions of bounded variation in the one-dimensional case (see for example~\cite[{\S 12, Section III}]{yeh}). The key ingredient in its proof is a decomposition theorem of Leonov~\cite[Theorem 3]{leon}. The statement of the theorem uses the notion of a \emph{completely monotone function}, which is defined in Section~\ref{secdef} below. It also uses the notion of the \emph{HK-variation anchored at $\mathbf{0}$}, which is also defined in Section~\ref{secdef}, and which is denoted by \emph{HK$\mathbf{0}$-variation} and $\varhkn$ in this paper. The HK$\mathbf{0}$-variation of a function is in general different from the ``usual'' HK-variation anchored at $\mathbf{1}$. However, a function which has bounded HK-variation also has bounded HK$\mathbf{0}$-variation, and vice versa (see Lemma~\ref{lemmahk0} below). Thus 
in the assumptions of Theorems~\ref{th1}-\ref{thbv} and throughout this paper the phrase ``$f$ has bounded HK-variation'' could be replaced by ``$f$ has bounded HK$\mathbf{0}$-variation''. However, the variations $\varhk$ and $\varhkn$ must \emph{not} be simply exchanged in the conclusions of the respective theorems.

\begin{theorem} \label{thbv2}
Let $f$ be a function on $[0,1]^d$ which has bounded HK-variation. Then there exist two uniquely determined completely monotone functions $f^+$ and $f^-$ on $[0,1]^d$ such that $f^+(\mathbf{0})=f^-(\mathbf{0})=0$ and
$$
f(\mathbf{x}) = f(\mathbf{0}) + f^+(\mathbf{x}) - f^-(\mathbf{x}), \qquad \mathbf{x} \in [0,1]^d,
$$
and
\begin{equation} \label{sumofv}
\varhkn f = \varhkn f^+ + \varhkn f^-.
\end{equation}
\end{theorem}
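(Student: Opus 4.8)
The plan is to deduce the theorem from the multivariate decomposition of Leonov~\cite[Theorem 3]{leon}, treating complete monotonicity as the correct multivariate analogue of monotonicity and imitating the one-dimensional construction of the positive and negative variation functions.

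I would begin by recording how $\varhkn$ acts on a completely monotone function. If $g$ is completely monotone on $[0,1]^d$ with $g(\mathbf{0})=0$, then its restriction to each face anchored at $\mathbf{0}$ is again completely monotone in the free variables, so every Vitali variation appearing in the definition of $\varhkn g$ is a sum of non-negative quasi-volumes and therefore telescopes over any partition. Summing these face contributions over all non-empty $u\subseteq\{1,\dots,d\}$ collapses to a single value, and an induction on $d$ (with $d=1$ giving $\varhkn g=g(1)-g(0)$) yields
\[
\varhkn g = g(\mathbf{1}).
\]
Once a completely monotone representation $f=f(\mathbf{0})+f^+-f^-$ with $f^\pm(\mathbf{0})=0$ is available, this identity turns the additivity~\eqref{sumofv} into the single scalar equation $\varhkn f = f^+(\mathbf{1})+f^-(\mathbf{1})$ and, together with coordinatewise monotonicity, shows that a completely monotone $\phi$ with $\phi(\mathbf{0})=0$ and $\varhkn\phi=0$ must vanish identically, since then $0=\phi(\mathbf{0})\le\phi(\mathbf{x})\le\phi(\mathbf{1})=0$.

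For existence I would apply Leonov's theorem to obtain completely monotone $g^+,g^-$ with $f=g^+-g^-$ and then set $f^\pm=g^\pm-g^\pm(\mathbf{0})$; subtracting a constant preserves complete monotonicity, and evaluating $f=g^+-g^-$ at $\mathbf{0}$ shows $g^+(\mathbf{0})-g^-(\mathbf{0})=f(\mathbf{0})$, so that $f=f(\mathbf{0})+f^+-f^-$ with $f^\pm(\mathbf{0})=0$ as required. The decisive extra input from Leonov's theorem is that this pair can be taken \emph{minimal}, meaning that for every other completely monotone representation the positive and negative parts dominate $f^+$ and $f^-$ in the completely monotone order. Minimality gives the inequality $\varhkn f\ge f^+(\mathbf{1})+f^-(\mathbf{1})$ (no cancellation is wasted), while the reverse inequality is just subadditivity of $\varhkn$ combined with the variation formula; together these prove~\eqref{sumofv}.

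Uniqueness then follows cleanly. If $f=f(\mathbf{0})+h^+-h^-$ is any completely monotone representation satisfying the analogue of~\eqref{sumofv}, then $\phi:=h^+-f^+=h^--f^-$ is completely monotone (by the minimality of $f^+,f^-$) and vanishes at $\mathbf{0}$; since $h^\pm=f^\pm+\phi$, the variation formula gives $h^+(\mathbf{1})+h^-(\mathbf{1})=\varhkn f+2\phi(\mathbf{1})$, so the imposed additivity forces $\phi(\mathbf{1})=\varhkn\phi=0$ and hence $\phi\equiv 0$. I expect the main obstacle to lie in the minimality statement, i.e.\ in the inequality $\varhkn f\ge f^+(\mathbf{1})+f^-(\mathbf{1})$: in one dimension this is elementary, but in the multivariate setting minimality must hold simultaneously on all $2^d-1$ faces and the positive and negative parts interact across faces of different dimensions, which is precisely where Leonov's theorem does the essential work. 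A secondary point requiring care is that Leonov formulates his decomposition and his notion of variation with their own normalization and anchoring conventions, so I would need to verify that his completely monotone parts are compatible with the anchoring at $\mathbf{0}$ built into $\varhkn$ before reading off the scalar identity $\varhkn f=f^+(\mathbf{1})+f^-(\mathbf{1})$.
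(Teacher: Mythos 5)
Your reduction has a genuine gap: you attribute to Leonov's theorem a minimality property that it does not contain, and both your existence and your uniqueness arguments rest on it. Lemma~\ref{lemmaleo} produces only the specific pair $f_1(\mathbf{x})=\varhkn(f;[\mathbf{0},\mathbf{x}])$ and $f_2=f_1-f$; normalizing this pair at $\mathbf{0}$ as you propose gives a decomposition whose variations sum to
\begin{equation*}
\varhkn f_1+\varhkn f_2 \;=\; 2\varhkn f - f(\mathbf{1}) + f(\mathbf{0}),
\end{equation*}
which strictly exceeds $\varhkn f$ whenever $f(\mathbf{1})-f(\mathbf{0})<\varhkn f$, i.e.\ in general. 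So the inequality $\varhkn f\ge f^+(\mathbf{1})+f^-(\mathbf{1})$, which you yourself flag as the main obstacle, is exactly where the cited theorem gives you nothing: Leonov's decomposition is \emph{not} minimal, and no dominating or minimal decomposition is asserted there (the paper states explicitly that Leonov's pair is not the distinguished one). The paper circumvents this by writing down the symmetrized functions $f^{\pm}(\mathbf{x})=\frac{1}{2}\left(\varhkn(f;[\mathbf{0},\mathbf{x}])\pm(f(\mathbf{x})-f(\mathbf{0}))\right)$, as in~\eqref{f+-} and~\eqref{f+-2}; complete monotonicity of $f^-$ comes from Leonov applied to $f$, and that of $f^+$ from Leonov applied to $-f$ via $\varhkn(-f;[\mathbf{0},\mathbf{x}])=\varhkn(f;[\mathbf{0},\mathbf{x}])$, after which $\varhkn f^+ + \varhkn f^- = \varhkn f$ is a direct computation using your (correct) formula $\varhkn g = g(\mathbf{1})-g(\mathbf{0})$ for completely monotone $g$ --- no minimality is needed for existence.

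Your uniqueness step contains a second instance of the same gap: you declare $\phi=h^+-f^+$ completely monotone ``by minimality'', which is again the unproven domination in the completely monotone order. What does follow cheaply from the plain triangle inequality~\eqref{trian}, as in the paper, is only the pointwise domination $h^{\pm}\ge f^{\pm}$; but $\phi\ge 0$ together with $\phi(\mathbf{0})=\phi(\mathbf{1})=0$ does not force $\phi\equiv 0$ unless $\phi$ is monotone, which you have not established. The paper closes precisely this hole with Lemma~\ref{lemmatri}, a strengthened triangle inequality proved via splits of the box $[\mathbf{0},\mathbf{b}]$ and the additivity~\eqref{grea4}: the deficit $\varhkn(h^+;[\mathbf{0},\mathbf{b}])+\varhkn(h^-;[\mathbf{0},\mathbf{b}])-\varhkn(f;[\mathbf{0},\mathbf{b}])$ is non-decreasing in $\mathbf{b}$, so a strict excess at any single point $\mathbf{x}$ propagates to $\mathbf{b}=\mathbf{1}$ and contradicts~\eqref{sumofv}. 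To repair your proposal, replace the appeal to minimality by the explicit symmetrized formulas for existence, and by Lemma~\ref{lemmatri} (or an honest proof of your completely-monotone-order domination, which amounts to essentially the same work) for uniqueness.
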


We call the unique decomposition $f=f^+-f^-$ having the properties described in Theorem~\ref{thbv2} the \emph{Jordan decomposition} of the function $f$. We note that using relation~\eqref{fgrel} below it is easy to obtain a variant of Theorem~\ref{thbv2} for the HK-variation instead of the HK$\mathbf{0}$-variation.\\

The following theorem shows, simply speaking, that any right-continuous function of bounded HK-variation defines a finite signed measure and vice versa, and that the HK$\mathbf{0}$-variation of the function and the total variation of the signed measure coincide. The Jordan decomposition of a signed measure and the total variation of a signed measure, denoted by $\vart$, are defined in Section~\ref{secdef} below.

\begin{theorem} \label{thbv}
\begin{enumerate}[label=\textbf{(\alph*)}]
\item Let $f$ be a right-continuous\footnote{We call a multivariate function right-continuous if it is coordinatewise right-continuous in each coordinate, at every point.} function on $[0,1]^d$ which has bounded HK-variation. Then there exists a unique signed Borel measure $\nu$ on $[0,1]^d$ for which
\begin{equation} \label{ths1}
f(\mathbf{x}) = \nu([\mathbf{0},\mathbf{x}]), \qquad \mathbf{x} \in [0,1]^d.
\end{equation}
Then we have
\begin{equation} \label{ths2}
\vart \nu = \varhkn f + |f(\mathbf{0})|.
\end{equation}
Furthermore, if $f(\mathbf{x})=f(\mathbf{0}) + f^+(\mathbf{x}) - f^- (\mathbf{x})$ is the Jordan decomposition of $f$, and $\nu=\nu^+-\nu^-$ is the Jordan decomposition\footnote{Note that contrary to the Jordan decomposition of a multivariate function of bounded variation, which we have not found in the literature and whose definition is given and whose existence is established by Theorem~\ref{thbv2}, the Jordan decomposition of a signed measure is a well-known concept in measure theory; in particular, the Jordan decomposition of a signed measure always exists and is unique. See Section~\ref{secdef} for details.} of $\nu$, then 
\begin{equation} \label{ths3}
f^+(\mathbf{x})=\nu^+([\mathbf{0},\mathbf{x}]\backslash\{\mathbf{0}\}) \qquad \textrm{and} \qquad f^-(\mathbf{x})=\nu^-([\mathbf{0},\mathbf{x}]\backslash\{\mathbf{0}\}), \qquad \mathbf{x} \in [0,1]^d.
\end{equation}
\item Let $\nu$ be a finite signed Borel measure on $[0,1]^d$. Then there exists a unique right-continuous function of bounded HK-variation $f$ on $[0,1]^d$ for which~\eqref{ths1} and~\eqref{ths2} hold. Furthermore, if $f(\mathbf{x})=f(\mathbf{0}) + f^+(\mathbf{x})-f^-(\mathbf{x})$ is the Jordan decomposition of $f$ and $\nu=\nu^+-\nu^-$ is the Jordan decomposition of $\nu$, then~\eqref{ths3} holds.
\end{enumerate}
\end{theorem}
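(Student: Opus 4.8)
The plan is to treat parts (a) and (b) as the two directions of a single dictionary between a distribution function $f(\mathbf{x})=\nu([\mathbf{0},\mathbf{x}])$ and its generating measure, with all of the bookkeeping resting on one inclusion--exclusion identity that I would prove as a preliminary lemma: for such an $f$ the $d$-fold alternating difference over a box equals the $\nu$-measure of the corresponding half-open box, and the $|u|$-fold difference taken on the lower face through $\mathbf{0}$ in the coordinates of $u$ recovers the $\nu$-measure of the relative interior of that face. Part (b) is then the easy direction, since the function is pinned down pointwise by~\eqref{ths1}, so uniqueness is automatic and only right-continuity, bounded HK-variation, and~\eqref{ths2} require proof. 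Right-continuity is continuity from above of the finite measures $\nu^+,\nu^-$ applied to boxes $[\mathbf{0},\mathbf{x}^{(n)}]\downarrow[\mathbf{0},\mathbf{x}]$; boundedness of the variation follows from the preliminary identity, because each alternating sum appearing in a Vitali variation is a sum of $\nu$-measures of pairwise disjoint half-open boxes, hence at most $\vart\nu$ in absolute value, and there are only finitely many faces to sum over.

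For part (a), where the measure must be manufactured from $f$, I would first invoke Theorem~\ref{thbv2} to write $f=f(\mathbf{0})+f^+-f^-$ with $f^+,f^-$ completely monotone and vanishing at $\mathbf{0}$, thereby reducing the construction of a \emph{signed} measure to that of two \emph{positive} measures. Each completely monotone $f^{\pm}$ has non-negative alternating differences, so the set function assigning to each half-open box its alternating difference is a non-negative, finitely additive premeasure on the algebra generated by such boxes; right-continuity upgrades finite to countable additivity, and the Carathéodory extension yields positive Borel measures $\nu^+,\nu^-$. Setting $\nu=\nu^+-\nu^-+f(\mathbf{0})\,\delta_{\mathbf{0}}$ and verifying~\eqref{ths1} by the preliminary identity gives existence. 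Uniqueness is the standard $\pi$-system argument: the boxes $[\mathbf{0},\mathbf{x}]$ form a $\pi$-system generating the Borel $\sigma$-algebra, so two finite signed measures inducing the same $f$ have vanishing difference on a generating $\pi$-system and therefore coincide.

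It remains to establish~\eqref{ths2} and~\eqref{ths3}, which I would handle uniformly once the correspondence is in place. Since $\nu(\{\mathbf{0}\})=\nu([\mathbf{0},\mathbf{0}])=f(\mathbf{0})$ and a single atom lies in only one piece of the Hahn decomposition, we get $|\nu|(\{\mathbf{0}\})=|f(\mathbf{0})|$. The functions $f^{\pm}(\mathbf{x})=\nu^{\pm}([\mathbf{0},\mathbf{x}]\setminus\{\mathbf{0}\})$ are completely monotone, vanish at $\mathbf{0}$, and satisfy $f=f(\mathbf{0})+f^+-f^-$; by the uniqueness clause of Theorem~\ref{thbv2} they are the Jordan decomposition of $f$, which is~\eqref{ths3}. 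For a completely monotone $g$ with $g(\mathbf{0})=0$ the non-negativity of all alternating differences makes the variation telescope, so that $\varhkn g=g(\mathbf{1})$; concretely, the face contributions to $\varhkn g$ reassemble into the $\nu^{\pm}$-measures of the relative interiors of the faces through $\mathbf{0}$, which partition $[0,1]^d\setminus\{\mathbf{0}\}$. Hence $\varhkn f^{\pm}=\nu^{\pm}([0,1]^d\setminus\{\mathbf{0}\})$, and combining this with~\eqref{sumofv} and $|\nu|(\{\mathbf{0}\})=|f(\mathbf{0})|$ yields $\varhkn f+|f(\mathbf{0})|=|\nu|([0,1]^d\setminus\{\mathbf{0}\})+|\nu|(\{\mathbf{0}\})=\vart\nu$, which is~\eqref{ths2}.

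The main obstacle I anticipate is the measure-construction step in part (a): verifying that the box premeasure built from the alternating differences of a completely monotone function is genuinely countably additive, which is exactly where right-continuity enters, and keeping the inclusion--exclusion accounting of the lower-dimensional faces consistent. This boundary bookkeeping --- the careful distinction between the closed box $[\mathbf{0},\mathbf{x}]$ and its relative interior --- is precisely what produces the deletion of $\{\mathbf{0}\}$ in~\eqref{ths3} and the additive correction $|f(\mathbf{0})|$ in~\eqref{ths2}, and it is where sign and indexing errors are easiest to make.
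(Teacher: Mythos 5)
Your overall architecture is the same as the paper's: reduce to the Jordan components of Theorem~\ref{thbv2}, build two positive measures by Carath\'eodory extension, handle the atom at $\mathbf{0}$ separately, use the telescoping identity $\varhkn(h;[\mathbf{0},\mathbf{x}])=h(\mathbf{x})-h(\mathbf{0})$ for completely monotone $h$, and settle uniqueness of $\nu$ by a $\pi$-system argument. Your part (b) reductions and the half-open-box inclusion--exclusion bookkeeping are fine. But there are two genuine gaps, and they sit exactly at the two places where the paper's proof does its real work. The first is in part (a): you apply Carath\'eodory to the premeasures generated by $f^+$ and $f^-$, remarking that ``right-continuity upgrades finite to countable additivity'' --- but the hypothesis gives right-continuity only of $f$, not of its Jordan components, and this is not automatic: a priori $\mathbf{x}\mapsto\varhkn(f;[\mathbf{0},\mathbf{x}])$ could fail to be right-continuous even when $f$ is. The paper devotes a substantial argument to this point: it forms the coordinatewise right-limit modifications $\tilde{f^+},\tilde{f^-}$, checks that they are completely monotone, vanish at $\mathbf{0}$, still decompose $f$ (here the right-continuity of $f$ enters), and satisfy $\varhkn \tilde{f^\pm}=\varhkn f^\pm$, and then invokes the \emph{uniqueness} clause of Theorem~\ref{thbv2} to conclude $\tilde{f^\pm}=f^\pm$, i.e.\ that $f^+,f^-$ were right-continuous all along. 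Without this step your premeasures need not be countably additive and the construction does not get off the ground.

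The second gap is your deduction of~\eqref{ths3}. You argue that the functions $g^{\pm}(\mathbf{x})=\nu^{\pm}([\mathbf{0},\mathbf{x}]\setminus\{\mathbf{0}\})$ are completely monotone, vanish at $\mathbf{0}$, and decompose $f$, ``so by the uniqueness clause of Theorem~\ref{thbv2} they are the Jordan decomposition of $f$.'' But the uniqueness in Theorem~\ref{thbv2} holds only among pairs that \emph{in addition} satisfy the variation identity~\eqref{sumofv}; decompositions into completely monotone parts vanishing at $\mathbf{0}$ are far from unique (the paper notes after Lemma~\ref{lemmaleo} that one may add any completely monotone $r$ to both parts). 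Indeed, your reasoning applied verbatim to any non-orthogonal splitting $\nu=\rho^+-\rho^-$ would ``prove'' the same false conclusion; nowhere does your sketch use $\nu^+\perp\nu^-$ except for the atom at $\mathbf{0}$. The missing step --- the paper's key computation --- is to verify~\eqref{sumofv} for the pair $(g^+,g^-)$ using the Hahn sets $C^{\pm}$: writing $\nu_f^{\pm}$ for the measures built from $f^{\pm}$, one has
\begin{equation*}
\varhkn g^{+} = g^{+}(\mathbf{1}) = \nu^{+}(C^{+}\setminus\{\mathbf{0}\}) = \nu_f(C^{+}\setminus\{\mathbf{0}\}) = \nu_f^{+}(C^{+})-\nu_f^{-}(C^{+}) \leq \nu_f^{+}\left([\mathbf{0},\mathbf{1}]\right) = \varhkn f^{+},
\end{equation*}
similarly $\varhkn g^{-}\leq\varhkn f^{-}$, and then the triangle inequality~\eqref{trian} forces equality throughout, after which the uniqueness clause legitimately applies. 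Since you derive the equality~\eqref{ths2} from~\eqref{ths3}, it inherits this gap: your partition argument in part (b) yields only the one-sided bound $\varhkn f \leq \vart\nu - |\nu(\{\mathbf{0}\})|$, and the reverse inequality is precisely what the orthogonality argument supplies.
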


This connection between functions of bounded HK-variation and finite signed measures is quite natural, and has probably been observed and used in a less specific form before. For example, this is the same mechanism by which a multidimensional additive set-function of bounded variation defines a finite signed measure and a corresponding multidimensional Stieltjes-integral, as described in Chapter 3 of Stanis{\l}aw Saks'~\cite{saks} classical monograph on the \emph{Theory of the Integral}. In essence, this is also the same principle which has been used by Zaremba~\cite{zaremba} for his proof of the Koksma--Hlawka inequality by multidimensional Abel summation, which is now the standard method. However, the decomposition in Theorem~\ref{thbv2} and the correspondence between functions of bounded variation and finite signed measures in Theorem~\ref{thbv}, and in particular the relation between the HK$\mathbf{0}$-variation of the function and the total variation of the corresponding signed measure, are far from 
being self-evident, and we have not found any explicit statement like Theorem~\ref{thbv2} or Theorem~\ref{thbv} in the literature.\\

A somewhat vague connection between functions of bounded HK-variation and signed measures is casually noted in~\cite{boul}. A new notion of bounded variation of a function (which has later been called \emph{bounded variation in the measure sense}), which is defined in terms of a signed measure corresponding to a function, is defined in~\cite{bl}, and is used there to prove a general Koksma--Hlawka inequality; however, it is not stated which functions are of bounded variation in this sense. A connection between functions of bounded HK-variation and functions of bounded variation in the measure sense (significantly weaker than our Theorem~\ref{thbv}) is mentioned as a ``Proposition'' (without proof) in~\cite{xiao}, with reference to an unpublished manuscript. The same result is stated as a ``Theorem'' (without proof) in~\cite{okten}, and then, by the same author and several years later, in~\cite{okter} as an ``unproven conjecture''. As our Theorem~\ref{thbv} shows the notion of bounded variation in the measure sense is superfluous, since it 
coincides with the notion of bounded HK-variation (aside from continuity issues).\\

Finally, we want to state two consequences of Theorem~\ref{th1} and the recent results obtained by the authors in~\cite{ad}. The first result shows that QMC integration with respect to general measures is possible with a convergence rate for the error which is almost the same as in the case of the uniform measure; the second is a \emph{tractability} result, which states that QMC integration is possible with a moderate number of sampling points in comparison with the dimension, just as in the case of the uniform measure.

\begin{corollary} \label{co1}
For any normalized Borel measure $\mu$ on $[0,1]^d$ and any $N \geq 1$ there exist points $\mathbf{x}_1,\dots,\mathbf{x}_N \in [0,1]^d$ for which
$$
\sup_f \left|\frac{1}{N} \sum_{n=1}^N f(\mathbf{x_n}) - \int_{[0,1]^d} f(\mathbf{x}) ~d\mu(\mathbf{x}) \right| \leq 63 \sqrt{d} \frac{ \left(2 + \log_2 N \right)^{(3d+1)/2}}{N},
$$
where the supremum is taken over all measurable functions $f$ on $[0,1]^d$ which satisfy $\varhk f \leq 1$.
\end{corollary}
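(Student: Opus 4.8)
The plan is to combine the generalized Koksma--Hlawka inequality of Theorem~\ref{th1} with the quantitative existence result for low-discrepancy point sets from~\cite{ad}, of which~\eqref{lowd2} is the qualitative form stated in the introduction. The whole argument is a clean two-step deduction, with no genuine estimation required beyond the two cited inputs.

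First I would fix the measure $\mu$ and the integer $N\geq 1$ and invoke the main result of~\cite{ad} in its explicit form, which produces a point set $\mathbf{x}_1,\dots,\mathbf{x}_N \in [0,1]^d$ satisfying
\begin{equation*}
D_N^*(\mathbf{x}_1,\dots,\mathbf{x}_N;\mu) \leq 63\sqrt{d}\,\frac{(2+\log_2 N)^{(3d+1)/2}}{N}.
\end{equation*}
The decisive feature here is that this discrepancy bound is \emph{uniform in the integrand}: the quantity $D_N^*(\mathbf{x}_1,\dots,\mathbf{x}_N;\mu)$ depends only on the chosen points and on $\mu$, and not on any function $f$.

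Next, keeping this point set fixed, I would apply Theorem~\ref{th1} to an arbitrary measurable function $f$ on $[0,1]^d$ with $\varhk f \leq 1$. Such an $f$ meets the hypotheses of Theorem~\ref{th1}, so its integration error is at most $(\varhk f)\,D_N^*(\mathbf{x}_1,\dots,\mathbf{x}_N;\mu) \leq D_N^*(\mathbf{x}_1,\dots,\mathbf{x}_N;\mu)$. Since the right-hand side no longer depends on $f$, I may pass to the supremum over all admissible $f$ without altering the bound, which yields precisely the claimed inequality.

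The only real obstacle is bookkeeping: one must read the explicit constant $63\sqrt{d}$ and the precise logarithmic form $(2+\log_2 N)^{(3d+1)/2}$ off the quantitative statement in~\cite{ad}, rather than off the schematic bound~\eqref{lowd2} quoted in the introduction. Beyond matching these constants, the two results multiply together without loss under the constraint $\varhk f \leq 1$, so there is nothing further to prove.
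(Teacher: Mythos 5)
Your proposal is correct and follows exactly the paper's own route: the authors deduce Corollary~\ref{co1} directly from Theorem~\ref{th1} combined with the explicit discrepancy bound of \cite[Theorem 1]{ad}, which is precisely your two-step argument. Your observation that the discrepancy bound is uniform in $f$, so the supremum over $\{f:\varhk f\leq 1\}$ can be taken at no cost, is the only point of substance, and you have it right.
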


\begin{corollary} \label{co2}
Let $\ve>0$ be given. Then for any normalized Borel measure $\mu$ on $[0,1]^d$ there exists a point set $\mathbf{x}_1,\dots,\mathbf{x}_N \in [0,1]^d$ such that
\begin{equation} \label{con}
N \leq 2^{26} d \ve^{-2}
\end{equation}
and
\begin{equation} \label{thtrequ}
\sup_f \left|\frac{1}{N} \sum_{n=1}^N f(\mathbf{x_n}) - \int_{[0,1]^d} f(\mathbf{x}) ~d\mu(\mathbf{x}) \right| \leq \ve,
\end{equation}
where the supremum is taken over all measurable functions $f$ on $[0,1]^d$ which satisfy $\varhk f \leq 1$.
\end{corollary}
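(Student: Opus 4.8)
The plan is to reduce the tractability statement to a purely discrepancy-theoretic existence result and then to quote the pre-asymptotic discrepancy bound from~\cite{ad}. The key observation is that, for a \emph{fixed} point set, the left-hand side of~\eqref{thtrequ} is nothing but the worst-case integration error over the unit ball of the HK-variation seminorm, and Theorem~\ref{th1} controls this by the single quantity $D_N^*(\mathbf{x}_1,\dots,\mathbf{x}_N;\mu)$, which does not depend on $f$. Hence the whole corollary will follow once we exhibit, for each $\mu$, a point set of admissible cardinality whose star-discrepancy with respect to $\mu$ is at most $\ve$.

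First I would apply Theorem~\ref{th1} to each individual measurable $f$ with $\varhk f \leq 1$. Since the right-hand side of~\eqref{th1equ} is then bounded by $D_N^*(\mathbf{x}_1,\dots,\mathbf{x}_N;\mu)$ \emph{uniformly} in $f$, taking the supremum over all such $f$ yields
$$
\sup_f \left|\frac{1}{N}\sum_{n=1}^N f(\mathbf{x}_n) - \int_{[0,1]^d} f(\mathbf{x})\,d\mu(\mathbf{x})\right| \leq D_N^*(\mathbf{x}_1,\dots,\mathbf{x}_N;\mu).
$$
Thus it suffices to find, for each normalized $\mu$, a point set with at most $2^{26} d \ve^{-2}$ points and discrepancy with respect to $\mu$ at most $\ve$. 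For this I would invoke the pre-asymptotic discrepancy bound of~\cite{ad}, which guarantees for every normalized Borel measure $\mu$ and every $N$ the existence of a point set with $D_N^*(\mathbf{x}_1,\dots,\mathbf{x}_N;\mu) \leq c_0 \sqrt{d/N}$ for an absolute constant $c_0$. This is the bound whose only dependence on the dimension is the polynomial factor $\sqrt{d}$, in contrast to the asymptotic estimate~\eqref{lowd2} that feeds into Corollary~\ref{co1}, and it is precisely this feature that produces tractability. Choosing $N$ to be the least integer with $c_0\sqrt{d/N} \leq \ve$, that is $N = \lceil c_0^2 d \ve^{-2}\rceil$, makes the discrepancy, and hence the worst-case error, at most $\ve$; it then remains only to verify that this $N$ obeys the cardinality constraint~\eqref{con}.

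The main obstacle is therefore bookkeeping with the explicit constant rather than any conceptual difficulty: one must confirm that the absolute constant supplied by~\cite{ad} satisfies $c_0^2 < 2^{26}$ with enough slack that the rounding in $N = \lceil c_0^2 d \ve^{-2}\rceil$ still respects $N \leq 2^{26} d\ve^{-2}$. When $d\ve^{-2}$ is large this is automatic, since the gap $(2^{26}-c_0^2)d\ve^{-2}$ then exceeds $1$ and absorbs the effect of the ceiling; the only regime requiring a separate, entirely trivial argument is that of large $\ve$, where a bounded (or even single-point) configuration already suffices. Apart from tracking these constants, the proof is a direct combination of Theorem~\ref{th1} with the existence result of~\cite{ad}, and is structurally identical to the proof of Corollary~\ref{co1}, with the pre-asymptotic bound replacing the asymptotic one.
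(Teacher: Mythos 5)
Your proposal is correct and takes essentially the same route as the paper: the paper likewise deduces the corollary by combining Theorem~\ref{th1} (which bounds the worst-case error over $\{f:\varhk f\leq 1\}$ uniformly by $D_N^*(\mathbf{x}_1,\dots,\mathbf{x}_N;\mu)$) with the existence results of~\cite{ad}. The only difference is cosmetic: the paper quotes~\cite[Corollary 1]{ad}, which is already stated in the inverse form $N \leq 2^{26} d \ve^{-2}$, so the constant and rounding bookkeeping you carry out with the pre-asymptotic bound $c_0\sqrt{d/N}$ is performed inside~\cite{ad} rather than in this paper.
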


The corollaries follow directly from Theorem~\ref{th1} together with~\cite[Theorem 1]{ad} and~\cite[Corollary 1]{ad}, respectively. Corollary~\ref{co2} implies that the problem of integrating $d$-dimensional functions whose HK-variation is uniformly bounded with respect to \emph{any} normalized measure is polynomially tractable. For more information on tractability see the three volumes on \emph{Tractability of Multivariate Problems} by Novak and Wo{\'z}niakowski~\cite{n1,n2,n3}. In the case of the function class being restricted to indicator functions of sets $A \in \mathcal{A}^*$ (that is, in the case of the left-hand side of~\eqref{thtrequ} being the star-discrepancy with respect to $\mu$) Corollary~\ref{co2} was proved in~\cite{hnww} (without an effective value for the constant in~\eqref{con}). It should be noted that the results in~\cite{ad} are pure existence results, and that the problem of constructing point sets satisfying the conclusions of Corollary~\ref{co1} and~\ref{co2} is completely open. This 
problem is shortly addressed in Section~\ref{sectrans}.\\

The outline of the remaining part of this paper is as follows. Section~\ref{secdef} contains all necessary definitions, and some basic properties of the concepts needed for our proofs. In Section~\ref{thbv} the proof of Theorem~\ref{thbv2} and~\ref{thbv} is given, as well as the proof of a lemma establishing a connection between the HK-variation and the HK$\mathbf{0}$-variation (see Lemma~\ref{lemmahk0} below). In Section~\ref{seckh} we deduce Theorem~\ref{th1} from Theorem~\ref{thbv}. In Section~\ref{sectrans} we discuss Chelson's result, which is formulated together with a transformation process which supposedly transforms low-discrepancy point sets with respect to $\lambda$ into low-discrepancy point sets with respect to a general measure $\mu$. We show why this transformation does not have the alleged properties, and that it does actually work when the measure $\mu$ is of product type.

\section{Definitions and basic properties} \label{secdef}

To define the variation in the sense of Hardy and Krause, we follow the exposition in~\cite{leon}; however, additionally we pay attention to the fact that we can put the anchor either to the lower left corner $\mathbf{0}$ (as in~\cite{leon}) or to the upper right corner $\mathbf{1}$ (as is usually done in the context of discrepancy theory; see for example~\cite{kn}). Subsequently, we introduce completely monotone functions, following~\cite[Section 3]{leon}. We note that a comprehensive survey on HK-variation and its properties can be found in~\cite{owen}.\\

Let $f(\mathbf{x})$ be a function on $[0,1]^d$. Let $\mathbf{a}=(a_1,\dots,a_d)$ and $\mathbf{b}=(b_1,\dots,b_d)$ be elements of $[0,1]^d$ such that $\mathbf{a}<\mathbf{b}$. We introduce the $d$-dimensional difference operator $\Delta^{(d)}$, which assigns to the axis-parallel box $A = [\mathbf{a},\mathbf{b}]$ a $d$-dimensional \emph{quasi-volume}
\begin{equation} \label{quasi}
\Delta^{(d)} (f;A) = \sum_{j_1=0}^1 \dots \sum_{j_d=0}^1 (-1)^{j_1+\dots+j_d} f(b_1 + j_1 (a_1 - b_1), \dots,b_d + j_d (a_d - b_d)).
\end{equation}
For $s=1, \dots, d$, let 
$$
0 = 	x_0^{(s)} < x_1^{(s)} < \dots < x_{m_s}^{(s)} =1 
$$
be a partition of $[0,1]$, and let $\mathcal{P}$ be the partition of $[0,1]^d$ which is given by
\begin{equation} \label{part}
\mathcal{P}=\left\{ \left[x_{l_1}^{(1)},x_{l_1+1}^{(1)}\right] \times \dots \times \left[x_{l_d}^{(d)},x_{l_d+1}^{(d)}\right], \qquad l_s=0, \dots, m_s-1,~s=1, \dots, d\right\}.
\end{equation}
Then the \emph{variation of $f$ on $[0,1]^d$ in the sense of Vitali} is given by
\begin{equation} \label{vitali}
V^{(d)}(f;[0,1]^d) = \sup_{\mathcal{P}} \sum_{A \in \mathcal{P}} \left| \Delta^{(d)} (f;A) \right|,
\end{equation}
where the supremum is extended over all partitions of $[0,1]^d$ into axis-parallel boxes generated by $d$ one-dimensional partitions of $[0,1]$, as in~\eqref{part}. For $1 \leq s \leq d$ and $1 \leq i_1 < \dots < i_s \leq d$, let $V^{(s)}(f;i_1, \dots, i_s;[0,1]^d)$ denote the $s$-dimensional variation in the sense of Vitali of the restriction of $f$ to the face
\begin{equation} \label{face1}
U_d^{(i_1,\dots,i_s)} = \left\{(x_1, \dots,x_d) \in [0,1]^d:~x_j = 1~\textrm{for all} ~j \neq i_1, \dots, i_s\right\}
\end{equation}
of $[0,1]^d$. Then the \emph{variation of $f$ on $[0,1]^d$ in the sense of Hardy and Krause anchored at~$\mathbf{1}$}, abbreviated by HK-variation, is given by
\begin{equation} \label{hkdef}
\varhk (f;[0,1]^d) = \sum_{s=1}^d ~ \sum_{1 \leq i_1 < \dots < i_s \leq d} V^{(s)}\left(f;i_1, \dots, i_s;[0,1]^d\right).
\end{equation}

Note that for the definition of the HK-variation in~\eqref{hkdef}, we add the $d$-dimensional variation in the sense of Vitali plus the variation in the sense of Vitali on all lower-dimensional faces of $[0,1]^d$ which are adjacent to $\mathbf{1}$. For the HK$\mathbf{0}$-variation, we take instead the sum over those lower-dimensional faces which are adjacent to $\mathbf{0}$. More precisely, let $V^{(s;\mathbf{0})}(f;i_1, \dots, i_s;[0,1]^d)$ denote the $s$-dimensional variation in the sense of Vitali of the restriction of $f$ to the face
\begin{equation*} 
W_d^{(i_1,\dots,i_s)} = \left\{(x_1, \dots,x_d) \in [0,1]^d:~x_j = 0~\textrm{for all} ~j \neq i_1, \dots, i_s\right\}.
\end{equation*}
Then the \emph{variation of $f$ on $[0,1]^d$ in the sense of Hardy and Krause anchored at~$\mathbf{0}$}, abbreviated by HK$\mathbf{0}$-variation, is given by
\begin{equation} \label{hkdef0}
\varhkn (f;[0,1]^d) = \sum_{s=1}^d ~ \sum_{1 \leq i_1 < \dots < i_s \leq d} V^{(s;\mathbf{0})}\left(f;i_1, \dots, i_s;[0,1]^d \right).
\end{equation}
For any $\mathbf{a} \in [0,1]^d,~\mathbf{a} \neq \mathbf{0},$ we can define the variation in the sense of Vitali and the HK$\mathbf{0}$-variation of $f$ on $[\mathbf{0},\mathbf{a}]$ in a similar way as above, by considering decompositions of $[\mathbf{0},\mathbf{a}]$ into axis-parallel boxes instead of decompositions of $[0,1]^d$, and again taking a sum over all lower-dimensional faces adjacent to $\mathbf{0}$ as in~\eqref{hkdef0}. For notational convenience we also define $\varhkn(f;[\mathbf{0},\mathbf{0}])=0$. Throughout this paper, we simply write $\varhk f$ and $\varhkn f$ for $\varhk (f;[0,1]^d)$ and $\varhkn (f;[0,1]^d)$, respectively.\\

The HK-variation and the HK$\mathbf{0}$-variation of a function are in general different; for example, the indicator function $f$ of the closed axis-parallel box stretching from the point $(1/2,\dots,1/2)$ to $\mathbf{1}$ has HK-variation $2^d-1$, but HK$\mathbf{0}$-variation only 1. This difference reflects the fact that on the one hand the function $f$ can be written as the sum/difference of no less than $2^d-1$ indicator functions of axis-parallel boxes which have one vertex at the origin (which affects the error term in the Koksma--Hlawka inequality for $f$ in Theorem~\ref{th1}), but on the other hand $f$ is the distribution function of a measure whose total mass is only~1 (namely the Dirac measure centered at $(1/2,\dots,1/2)$; consequently this version of the variation appears in Theorem~\ref{thbv}). This example represents the ``worst case'': we will show in Lemma~\ref{lemmahk0} below that we always have $\varhk \leq (2^d-1)\varhkn$ and vice versa.\\

We note that by a simple mirroring argument for any function of bounded HK-variation $f$ we have
\begin{equation} \label{fgrel}
\varhk f = \varhkn g, \qquad \textrm{where $g$ is defined by $g(\mathbf{x})=f(\mathbf{1}-\mathbf{x})$,\quad $\mathbf{x} \in [0,1]^d$;}
\end{equation}
this relation will be needed in the proof of Theorem~\ref{th1}, and explains why the HK$\mathbf{0}$-variation turns into the HK-variation when Theorem~\ref{thbv} is used to prove Theorem~\ref{th1} (see Section~\ref{seckh}).\\

We will also need the following lemma. 

\begin{lemma}\label{lemmatri}
Let $f$ and $g$ be functions on $[0,1]^d$ which have bounded HK$\mathbf{0}$-variation. Then for any $\mathbf{a},\mathbf{b} \in [0,1]^d,~\mathbf{a} \leq \mathbf{b}$, we have
\begin{eqnarray*}
& & \varhkn (f+g;[\mathbf{0},\mathbf{b}]) - \varhkn (f+g;[\mathbf{0},\mathbf{a}]) \\
& \leq & \varhkn (f;[\mathbf{0},\mathbf{b}]) + \varhkn (g;[\mathbf{0},\mathbf{b}]) - \varhkn (f;[\mathbf{0},\mathbf{a}]) - \varhkn (g;[\mathbf{0},\mathbf{a}]).
\end{eqnarray*}
\end{lemma}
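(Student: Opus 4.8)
The plan is to reduce the statement to a subadditivity property of the $s$-dimensional Vitali variation on nested corner boxes, and then to prove that property using the additivity of the Vitali variation with respect to grid decompositions. Recall that, by definition, $\varhkn(h;[\mathbf{0},\mathbf{x}]) = \sum_{\emptyset \neq I} V_I(h;\mathbf{x})$, where the sum runs over all nonempty $I = \{i_1 < \dots < i_s\} \subseteq \{1,\dots,d\}$ and $V_I(h;\mathbf{x}) := V^{(s;\mathbf{0})}(h;i_1,\dots,i_s;[\mathbf{0},\mathbf{x}])$ denotes the $s$-dimensional Vitali variation of the restriction of $h$ to the face adjacent to $\mathbf{0}$, computed over the box $\prod_{j \in I}[0,x_j]$. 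Since the claimed inequality is a sum of the contributions of the individual faces $I$ (with the left-hand side carrying the $(f+g)$-term and the right-hand side the $f$- and $g$-terms), it suffices to prove, for each fixed $I$, the single-face inequality
\[
V_I(f+g;\mathbf{b}) - V_I(f+g;\mathbf{a}) \leq \bigl(V_I(f;\mathbf{b}) - V_I(f;\mathbf{a})\bigr) + \bigl(V_I(g;\mathbf{b}) - V_I(g;\mathbf{a})\bigr),
\]
and then sum over all $I$. Note that, because $\mathbf{a} \leq \mathbf{b}$, the face of $[\mathbf{0},\mathbf{a}]$ is the corner sub-box $\prod_{j \in I}[0,a_j]$ of the face of $[\mathbf{0},\mathbf{b}]$, so on each face we are comparing the Vitali variation of one and the same function on two nested corner boxes.

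The key tool will be the additivity of the $s$-dimensional Vitali variation with respect to grid decompositions: if the box $\prod_{j \in I}[0,b_j]$ is cut by the hyperplanes $x_j = a_j$ (for those $j \in I$ with $0 < a_j < b_j$) into the resulting family of octant sub-boxes $Q$, then $V_I(h;\mathbf{b}) = \sum_{Q} V^{(s)}(h;Q)$. I would prove this from two elementary facts: first, the difference operator $\Delta^{(s)}$ is additive when a box is split by a coordinate hyperplane, so that refining a grid partition never decreases $\sum_{A}|\Delta^{(s)}(h;A)|$; second, every grid partition of $\prod_{j \in I}[0,b_j]$ can be refined by inserting the breakpoints $a_j$, after which each cell lies in exactly one octant and the cells inside a fixed octant $Q$ form a grid partition of $Q$. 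The inequality ``$\leq$'' then follows by refining an arbitrary grid partition and grouping its cells by octant; the inequality ``$\geq$'' follows by assembling near-optimal grid partitions of the individual octants into a single grid partition of $\prod_{j \in I}[0,b_j]$ (taking in each coordinate the union of all breakpoints used, together with $a_j$) and using that refinement does not decrease the Vitali sum on each octant. Since the corner octant is exactly $\prod_{j \in I}[0,a_j]$, this additivity yields $V_I(h;\mathbf{b}) - V_I(h;\mathbf{a}) = \sum_{Q \neq \mathrm{corner}} V^{(s)}(h;Q)$ for $h \in \{f,g,f+g\}$.

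With this identity in hand the single-face inequality is immediate: applying it to $f+g$ and then using the linearity $\Delta^{(s)}(f+g;A) = \Delta^{(s)}(f;A) + \Delta^{(s)}(g;A)$ together with the triangle inequality $|\Delta^{(s)}(f+g;A)| \leq |\Delta^{(s)}(f;A)| + |\Delta^{(s)}(g;A)|$ (which, summed over a partition and passed to the supremum, gives $V^{(s)}(f+g;Q) \leq V^{(s)}(f;Q) + V^{(s)}(g;Q)$ on each octant), I obtain
\[
V_I(f+g;\mathbf{b}) - V_I(f+g;\mathbf{a}) = \sum_{Q \neq \mathrm{corner}} V^{(s)}(f+g;Q) \leq \sum_{Q \neq \mathrm{corner}} \bigl(V^{(s)}(f;Q) + V^{(s)}(g;Q)\bigr),
\]
and the right-hand side equals $\bigl(V_I(f;\mathbf{b}) - V_I(f;\mathbf{a})\bigr) + \bigl(V_I(g;\mathbf{b}) - V_I(g;\mathbf{a})\bigr)$ by the additivity identity applied to $f$ and to $g$. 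Summing over all nonempty $I$ then gives the lemma. I expect the main obstacle to be the rigorous justification of the grid-additivity of the Vitali variation, since the definition only permits tensor-product (grid) partitions and one must therefore argue entirely through refinements rather than through arbitrary unions of partitions of the sub-boxes; the only other point needing care is the bookkeeping of degenerate coordinates where $a_j \in \{0, b_j\}$ (for which the corresponding octant split is absent and the corner term $V_I(\cdot;\mathbf{a})$ may vanish), but these cases are either trivial or follow from the generic one.
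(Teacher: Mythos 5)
Your proof is correct and follows essentially the same route as the paper: decompose $[\mathbf{0},\mathbf{b}]$ into the corner box $[\mathbf{0},\mathbf{a}]$ plus complementary boxes, use additivity of the Vitali variation over this split to express the variation difference as a sum over the complementary pieces, apply the triangle inequality for $\Delta^{(s)}$ there, and repeat face by face for the HK$\mathbf{0}$-sum. The only difference is that the paper simply cites the split-additivity identity from Owen's survey (\cite[Lemma 1]{owen}) and allows an arbitrary split with $[\mathbf{0},\mathbf{a}]$ as one piece, whereas you restrict to the grid/octant split and prove the additivity from scratch via refinement arguments --- a harmless, self-contained variant of the same argument.
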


Note that as a special case of the lemma (for $\mathbf{a}=\mathbf{0}$) we have 
\begin{equation} \label{trian}
\varhkn (f+g;[\mathbf{0},\mathbf{b}]) \leq \varhkn (f;[\mathbf{0},\mathbf{b}]) + \varhkn (g;[\mathbf{0},\mathbf{b}]),
\end{equation} 
which is just the triangle inequality for the HK$\mathbf{0}$-variation. In a similar way we could prove the (well-known) triangle inequality for the HK-variation: we have 
\begin{equation} \label{trian2}
\varhk (f+g) \leq \varhk f + \varhk g.
\end{equation}

\begin{proof}[Proof of Lemma~\ref{lemmatri}]
There exist a number $m$ and axis-parallel boxes
$$
\left[\mathbf{a}_i,\mathbf{b}_i\right], \qquad i=1, \dots, m,
$$
such that $[\mathbf{a}_1,\mathbf{b}_1] = [\mathbf{0},\mathbf{a}]$ and such that
$$
\bigcup_{i=1}^m [\mathbf{a}_i,\mathbf{b}_i] = [\mathbf{0},\mathbf{b}] \qquad \textrm{and} \qquad [\mathbf{a}_i,\mathbf{b}_i) \cap [\mathbf{a}_j,\mathbf{b}_j) = \emptyset ~\textrm{for $i \neq j$}.
$$
The system $[\mathbf{a}_i,\mathbf{b}_i], ~1 \leq i \leq m$ is called a \emph{split} of $[\mathbf{0},\mathbf{b}]$, and for any function $h$ which has bounded variation on $[0,1]^d$ we have
\begin{equation} \label{grea4}
V^{(d)}(h;[\mathbf{0},\mathbf{b}])  = \sum_{i=1}^m V^{(d)} (h;[\mathbf{a}_i,\mathbf{b}_i])
\end{equation}
(this is stated, for example, in~\cite[Lemma 1]{owen}). Thus by the triangle inequality for the variation in the sense of Vitali (which follows directly from the ordinary triangle inequality for real numbers) we have
\begin{eqnarray*}
& & V^{(d)}(f+g;[\mathbf{0},\mathbf{b}]) - V^{(d)}(f+g;[\mathbf{0},\mathbf{a}]) \\
& = & \sum_{i=2}^m V^{(d)} (f+g;[\mathbf{a}_i,\mathbf{b}_i]) \\
& \leq & \sum_{i=2}^m \left(V^{(d)} (f;[\mathbf{a}_i,\mathbf{b}_i]) + V^{(d)} (g;[\mathbf{a}_i,\mathbf{b}_i])\right) \\
& = & V^{(d)}(f;[\mathbf{0},\mathbf{b}]) - V^{(d)}(f;[\mathbf{0},\mathbf{a}]) + V^{(d)}(g;[\mathbf{0},\mathbf{b}]) - V^{(d)}(g;[\mathbf{0},\mathbf{a}]).
\end{eqnarray*}
Similar inequalities hold for the variation in the sense of Vitali on all lower-dimensional faces of $[\mathbf{0},\mathbf{b}]$ adjacent to $\mathbf{0}$. Since the HK$\mathbf{0}$-variation is defined as the sum over these variations in the sense of Vitali, and since the requested inequality holds in each summand, we obtain the conclusion of the lemma.
\end{proof}

The following lemma establishes the connection between HK-variation and HK$\mathbf{0}$-variation, which was already announced before the statement of Theorem~\ref{thbv2}. Its proof relies upon Theorem~\ref{thbv}, and will be given at the end of Section~\ref{secproofsbv}.

\begin{lemma} \label{lemmahk0}
Let $f$ be a function on $[0,1]^d$ which has bounded HK$\mathbf{0}$-variation. Then $f$ has bounded HK-variation as well, and 
$$
\varhk f \leq \left(2^d-1\right) \varhkn f.
$$
The same statement holds if the roles of HK$\mathbf{0}$-variation and HK-variation are interchanged.
\end{lemma}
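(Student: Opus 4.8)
The plan is to use the correspondence of Theorem~\ref{thbv} to translate both variations of $f$ into statements about the total variation of an associated signed measure over suitable subregions of $[0,1]^d$, and then to exploit monotonicity of total variation. I would first prove the main inequality $\varhk f \leq (2^d-1)\varhkn f$ under the temporary assumption that $f$ is right-continuous; the passage to general $f$ will be the main technical point, discussed at the end.

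So assume $f$ is right-continuous with $\varhkn f < \infty$. The construction underlying Theorem~\ref{thbv} builds from the quasi-volumes of $f$ a finite signed Borel measure $\nu$ with $f(\mathbf{x}) = \nu([\mathbf{0},\mathbf{x}])$, where finiteness of $\vart\nu$ is guaranteed precisely by $\varhkn f < \infty$; part (b) of Theorem~\ref{thbv} then confirms that $f$ has bounded HK-variation and that~\eqref{ths2} holds, i.e.\ $\vart\nu = \varhkn f + |f(\mathbf{0})|$. The key observation is a measure-theoretic reading of each Vitali term in the definition~\eqref{hkdef} of $\varhk f$. For a non-empty index set $I=\{i_1<\dots<i_s\}$, the restriction of $f$ to the face $U_d^{(i_1,\dots,i_s)}$ is, as a function of the free coordinates, the distribution function (anchored at $\mathbf{0}$) of the marginal measure $\nu_I(E)=\nu\big(E\times\prod_{j\notin I}[0,1]\big)$, since fixing the remaining coordinates at $1$ leaves the box full in those directions. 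Hence, for any partition of the free coordinates, the corresponding quasi-volumes of $f$ equal the $\nu$-measures of disjoint half-open boxes of the form $B\times\prod_{j\notin I}[0,1]$, all contained in the region $R_I := \{\mathbf{x}\in[0,1]^d:\ x_i>0 \text{ for all } i\in I\}$. Taking absolute values, summing, and passing to the supremum over partitions gives
\begin{equation*}
V^{(s)}\big(f;i_1,\dots,i_s;[0,1]^d\big) \leq \vart(\nu;R_I).
\end{equation*}

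I would finish with a counting argument. Since $I\neq\emptyset$ we have $\mathbf{0}\notin R_I$, so $R_I\subseteq[0,1]^d\setminus\{\mathbf{0}\}$, and monotonicity and additivity of total variation together with~\eqref{ths2} yield $\vart(\nu;R_I)\leq\vart\nu-|\nu(\{\mathbf{0}\})|=\varhkn f$, using $\nu(\{\mathbf{0}\})=f(\mathbf{0})$. As there are exactly $2^d-1$ non-empty index sets $I$, summing the displayed inequality over all of them and comparing with~\eqref{hkdef} gives $\varhk f \leq (2^d-1)\varhkn f$. The ``vice versa'' statement then follows for free from the mirroring relation~\eqref{fgrel}: applying the inequality just proved to $g(\mathbf{x})=f(\mathbf{1}-\mathbf{x})$ and using $\varhkn g=\varhk f$ and $\varhk g=\varhkn f$ (both instances of~\eqref{fgrel}) gives $\varhkn f\leq(2^d-1)\varhk f$.

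The step I expect to be the main obstacle is removing the right-continuity assumption, because for a general $f$ of bounded HK$\mathbf{0}$-variation there need be no signed measure with $f(\mathbf{x})=\nu([\mathbf{0},\mathbf{x}])$ — an isolated ``spike'' already breaks this in dimension one, while leaving both variations finite. I would handle this by approximation: replace $f$ by a family of right-continuous functions $f_{\ve}$ agreeing with $f$ up to small rightward shifts at its discontinuities, chosen so that $\varhkn f_{\ve}\leq\varhkn f+o(1)$ while $\varhk f_{\ve}\to\varhk f$, and then let $\ve\to 0$. Establishing the required semicontinuity of both variations under this regularization — that the supremum over partitions defining $\varhk f$ is recovered in the limit, and that the regularization does not inflate the HK$\mathbf{0}$-variation — is the delicate part where most of the care is needed; everything else is bookkeeping with~\eqref{hkdef}, \eqref{hkdef0}, and the additivity and monotonicity of total variation.
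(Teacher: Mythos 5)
Your right-continuous half is correct, and it is essentially the paper's own argument in a slightly streamlined form: the paper likewise routes through Theorem~\ref{thbv} and bounds each of the $2^d-1$ Vitali face terms in \eqref{hkdef} by the mass of the associated signed measure away from the origin. The only real difference is that the paper first splits $f$ via its Jordan decomposition $f=f(\mathbf{0})+f^+-f^-$, uses the identity $V^{(s)}=\Delta^{(s)}$ for completely monotone functions, bounds each face term of $f^{\pm}$ by $\nu^{\pm}([0,1]^d\setminus\{\mathbf{0}\})=\varhkn f^{\pm}$, and recombines via \eqref{sumofv} and \eqref{trian2}; you instead bound the face terms of $f$ itself by $|\nu|(R_I)\le \vart\nu-|f(\mathbf{0})|=\varhkn f$ using \eqref{ths2}, which avoids the triangle inequality altogether and is fine. (Minor point: the direction you need, from $f$ to $\nu$, is part~(a) of Theorem~\ref{thbv}, not part~(b); and as in the paper's proof of that theorem, its hypothesis is really used only in the form of bounded HK$\mathbf{0}$-variation, so there is no circularity in invoking it here. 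Your mirroring step via \eqref{fgrel} for the interchanged statement is exactly the paper's.)

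The genuine gap is the reduction of general $f$ to the right-continuous case, which you leave as a plan rather than a proof, and the plan as sketched is much harder than the ``bookkeeping'' you anticipate. In $d\ge 2$ it is not even clear how to define the regularization: a function of bounded HK$\mathbf{0}$-variation can be discontinuous along a countable dense family of hyperplane slabs $\{x_j=c\}$ (already a product of one-dimensional monotone functions with dense jump sets does this), so ``small rightward shifts at its discontinuities'' cannot be performed locally and independently, and you would then still have to prove both semicontinuity statements — $\varhk f\le\liminf_{\ve}\varhk f_{\ve}$ and $\varhkn f_{\ve}\le\varhkn f+o(1)$ — for this global surgery, neither of which is routine (note that the naive right-continuous regularization fails exactly on your spike example, since it annihilates the spike and strictly decreases $\varhk$). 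The paper sidesteps all of this with an exact device you are missing: Theorem~\ref{thbv2} requires no continuity, so write $f=f(\mathbf{0})+f^+-f^-$ with $f^{\pm}$ completely monotone; by the telescoping identities \eqref{vdequ} and \eqref{vdequ2}, every Vitali term of a completely monotone function — hence both its HK-variation and its HK$\mathbf{0}$-variation — depends only on its values at the $2^d$ corners of $[0,1]^d$; replacing $f^+$ by $\overline{f^+}(\mathbf{x})=f^+(\tau(x_1),\dots,\tau(x_d))$, where $\tau(y)=0$ for $0\le y<1$ and $\tau(1)=1$, yields a right-continuous completely monotone function with the same corner values, to which the already-proved case applies verbatim, with no limits or $\ve$'s at all; then \eqref{sumofv} and \eqref{trian2} recombine the pieces. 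Until you either carry out your approximation in full or substitute an exact reduction of this kind, the second half of your proof is a conjecture, not an argument.
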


A function $h$ on $[0,1]^d$ is called \emph{completely monotone} if for any closed axis-parallel box $A \subset [0,1]^d$ of arbitrary dimension $s$ (where $1 \leq s \leq d$) its $s$-dimensional quasi-volume $\Delta^{(s)}$ generated by the function $h$ is non-negative (other words which are used for this property are \emph{quasi-monotone}, \emph{monotonely monotone} and \emph{entirely monotone}). A function of bounded HK-variation can be split into the difference of two completely monotone functions; for the two-dimensional case this is mentioned in~\cite{ac}, where it is attributed to Hobson~\cite{hobson}. The following result of Leonov~\cite{leon} shows a way for obtaining such a decomposition. 

\begin{lemma}[{\cite[Theorem 3]{leon}}] \label{lemmaleo}
Let $f(\mathbf{x})$ be a function on $[0,1]^d$, which has bounded HK$\mathbf{0}$-variation. Then the functions 
\begin{equation} \label{equleo}
f_1 (\mathbf{x}) = \varhkn(f;[\mathbf{0},\mathbf{x}]) \qquad \textrm{and} \qquad f_2 (\mathbf{x}) = f_1(\mathbf{x}) - f(\mathbf{x})
\end{equation}
are completely monotone, and
$$
f(\mathbf{x}) = f_1(\mathbf{x}) - f_2(\mathbf{x}), \qquad \mathbf{x} \in [0,1]^d.
$$
\end{lemma}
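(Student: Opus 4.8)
The plan is to verify directly that $f_1$ and $f_2$ meet the defining condition of complete monotonicity, namely that $\Delta^{(t)}(f_1;A)\geq 0$ and $\Delta^{(t)}(f_2;A)\geq 0$ for every axis-parallel box $A$ of every dimension $t$ with $1\leq t\leq d$. The identity $f=f_1-f_2$ is immediate from the definition $f_2=f_1-f$, so only complete monotonicity needs argument (note also that $f_1(\mathbf{x})=\varhkn(f;[\mathbf{0},\mathbf{x}])$ is finite, being dominated by $\varhkn f$). Fix such a box $A$; write $K=\{k_1<\dots<k_t\}$ for its active coordinates, ranging over intervals $[a_k,b_k]$, and $J=\{1,\dots,d\}\setminus K$ for its fixed coordinates, held at arbitrary values $c_j\in[0,1]$.

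The central computation is to evaluate $\Delta^{(t)}(f_1;A)$, where $f_1(\mathbf{y})$ is the sum \eqref{hkdef0} of Vitali variations of $f$ over the faces of $[\mathbf{0},\mathbf{y}]$ adjacent to $\mathbf{0}$. The summand indexed by a subset $S$ depends on $\mathbf{y}$ only through the coordinates $y_i$, $i\in S$; hence the difference operator $\Delta^{(t)}$ in the $K$-coordinates annihilates every summand with $K\not\subseteq S$. For a surviving subset $S=K\cup S'$, $S'\subseteq J$, applying $\Delta^{(t)}$ in the $K$-coordinates while holding $y_j=c_j$ for $j\in S'$, and using the additivity of the Vitali variation over splits (equation \eqref{grea4}) one coordinate at a time — the higher-dimensional analogue of the identity $V^{(1)}(f;[0,b])-V^{(1)}(f;[0,a])=V^{(1)}(f;[a,b])$ — collapses that term to the Vitali variation of $f$ over the corner box $\prod_{k\in K}[a_k,b_k]\times\prod_{j\in S'}[0,c_j]$ lying in the $\mathbf{0}$-anchored face carried by $K\cup S'$. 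I therefore expect
\[
\Delta^{(t)}(f_1;A)=\sum_{S'\subseteq J} V^{(t+|S'|)}\Big(f;\ \textstyle\prod_{k\in K}[a_k,b_k]\times\prod_{j\in S'}[0,c_j]\Big),
\]
a sum of Vitali variations, which is manifestly non-negative; this settles the complete monotonicity of $f_1$.

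For $f_2=f_1-f$ I would produce a parallel expansion of $\Delta^{(t)}(f;A)$ itself. Telescoping each fixed coordinate via $f(\dots,c_j,\dots)=f(\dots,0,\dots)+\big(f(\dots,c_j,\dots)-f(\dots,0,\dots)\big)$, iterated over all $j\in J$, writes $\Delta^{(t)}(f;A)$ as the sum over $S'\subseteq J$ of the quasi-volume of $f$ over the \emph{very same} corner box $\prod_{k\in K}[a_k,b_k]\times\prod_{j\in S'}[0,c_j]$ on the same face. Since the quasi-volume of $f$ over a single box never exceeds the Vitali variation of $f$ over that box, each term of this expansion is bounded above by the matching term of the expansion of $\Delta^{(t)}(f_1;A)$ found above. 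Summing yields $\Delta^{(t)}(f;A)\leq\Delta^{(t)}(f_1;A)$, i.e. $\Delta^{(t)}(f_2;A)\geq 0$, which is the complete monotonicity of $f_2$.

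The main obstacle I anticipate is the bookkeeping in the central computation: one must carefully combine the vanishing of the summands of $f_1$ that fail to depend on every active coordinate with the multi-coordinate additivity of the Vitali variation, and then verify that the corner boxes so produced coincide \emph{exactly} with those coming from the telescoping expansion of $\Delta^{(t)}(f;A)$. It is precisely this term-by-term matching — both expansions indexed by $S'\subseteq J$ and attached to the same boxes on the same $\mathbf{0}$-anchored faces — that powers the comparison for $f_2$, so keeping the faces, the active and fixed coordinates, and the (purely additive, non-alternating) telescoping straight is where the real care lies; the elementary fact that the increment of a function across an interval is dominated by its variation there, together with its Vitali-variation analogue, does all the actual estimating.
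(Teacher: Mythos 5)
Your proof is correct, but note that the paper itself never proves this lemma: it is imported verbatim from Leonov \cite[Theorem 3]{leon}, so there is no in-paper argument to compare against, and what you have produced is a self-contained substitute for the citation. Both pillars of your computation check out. (i) In the expansion $f_1(\mathbf{y})=\sum_{S\neq\emptyset}V^{(|S|;\mathbf{0})}(f;S;[\mathbf{0},\mathbf{y}])$ each summand depends only on the coordinates $y_i$ with $i\in S$, so the alternating difference over the active set $K$ annihilates every term with $K\not\subseteq S$; coordinatewise additivity of the Vitali variation under splits then collapses each surviving term $S=K\cup S'$ to $V\bigl(f;\prod_{k\in K}[a_k,b_k]\times\prod_{j\in S'}[0,c_j]\bigr)$ on the face carried by $K\cup S'$, giving $\Delta^{(t)}(f_1;A)$ as a sum of variations, hence nonnegative. (ii) Telescoping each fixed coordinate as value-at-$0$ plus increment over $[0,c_j]$, and expanding over the commuting one-dimensional difference operators, gives exactly $\Delta^{(t)}(f;A)=\sum_{S'\subseteq J}\Delta^{(t+|S'|)}(f;B_{S'})$ with the \emph{same} boxes $B_{S'}$ on the \emph{same} $\mathbf{0}$-anchored faces, and the trivial one-box partition bound $\Delta\leq V$ applied termwise yields $\Delta^{(t)}(f_2;A)\geq 0$. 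Two small points deserve explicit mention in a written-up version: the additivity you invoke must hold for splits of boxes \emph{not} anchored at $\mathbf{0}$ (the paper states \eqref{grea4} only for $[\mathbf{0},\mathbf{b}]$, but the general form is standard --- refine any partition to respect the cutting hyperplane, which cannot decrease the Vitali sum --- and the paper itself already uses it in this generality inside the proof of Lemma~\ref{lemmatri}); and the degenerate terms with $c_j=0$ should be read as zero on both sides (variation and quasi-volume over a degenerate box vanish, consistent with $\varhkn(f;[\mathbf{0},\mathbf{0}])=0$). Your route is presumably close in spirit to Leonov's original one, and as a byproduct your explicit identity for $\Delta^{(t)}(f_1;A)$ also recovers the fact, quoted and used later in the paper, that $\varhkn(h;[\mathbf{0},\mathbf{x}])=h(\mathbf{x})-h(\mathbf{0})$ for completely monotone $h$.
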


Note that the decomposition into two completely monotone functions is not unique. If $f$ has bounded HK$\mathbf{0}$-variation and $g,h$ are completely monotone functions such that $f=g-h$, then for any completely monotone function $r$ the two functions $f+r$ and $g+r$ also form a decomposition of $f$ as the difference of two completely monotone functions. Thus the decomposition given in Lemma~\ref{lemmaleo} is just one out of many possible decompositions of $f$; in particular, it is \emph{not} the outstanding decomposition which is mentioned in Theorem~\ref{thbv2}.\\

For a completely monotone function $h$ we have $\varhkn(h;[\mathbf{0},\mathbf{x}]) = h(\mathbf{x})-h(\mathbf{0})$ (as noted after~\cite[Definition 2]{leon}, this follows from Equations (6), (7) and Theorem 1 of~\cite{leon}). Consequently, for the functions $f_1,f_2$ in Lemma~\ref{lemmaleo} we have $\varhkn f_1 = \varhkn f$ and $\varhkn f_2 = \varhkn f-f(\mathbf{1})+f(\mathbf{0})$, which implies that both functions $f_1,f_2$ are of bounded HK$\mathbf{0}$-variation (and thus, by Lemma~\ref{lemmahk0}, also of bounded HK-variation).\\

A \emph{signed measure} is a measure which is also allowed to have negative values. A formal definition can be found for example in~\cite[Chapter 10]{yeh}. By the Jordan Decomposition Theorem (see for example~\cite[Theorem 10.21]{yeh}) any signed measure $\nu$ on a measurable space $(\Omega,\mathscr{A})$ can be uniquely decomposed into a ``positive'' and a ``negative'' part which are orthogonal to each other. More precisely, there exist measures $\nu^+$ and $\nu^-$ such that $\nu^+ \perp \nu^-$ and $\nu = \nu^+ - \nu^-$. Here $\nu^+ \perp \nu^-$ means that there exist two sets $C^+,C^- \in \mathscr{A}$ such that $\Omega = C^+ \cup C^-$ and $C^+ \cap C^- = \emptyset$, and such that $\nu^-(C^+)=0$ and $\nu^+(C^-)=0$. Furthermore, at least one of the measures $\nu^+,\nu^-$ is finite; as a consequence, if $\nu$ is finite, then both $\nu^+$ and $\nu^-$ must be finite. The pair $(\nu^+,\nu^-)$ is called the \emph{Jordan decomposition} of $\nu$. The measure $|\nu|=\nu^+ + \nu^-$ is called the variation measure of 
$\nu$, and the quantity $\vart \nu = |\nu|(\Omega)=\nu^+(\Omega)+\nu^-(\Omega)$ is called the total variation of $\nu$.\\

\section{Functions of bounded variation and signed measures:~Proof of Theorem~\ref{thbv2}, Theorem~\ref{thbv} and Lemma~\ref{lemmahk0}} \label{secproofsbv}

In the proofs of Theorem~\ref{thbv2} and Theorem~\ref{thbv} below, only the variation anchored at $\mathbf{0}$ (that is, the HK$\mathbf{0}$-variation) plays a role. Subsequently, in Lemma~\ref{lemmahk0}, the connection between HK$\mathbf{0}$-variation and HK-variation is established.

\begin{proof}[Proof of Theorem~\ref{thbv2}]
Let $f$ be a function on $[0,1]^d$ which has bounded HK$\mathbf{0}$-variation, and let $f_1,f_2$ be the functions defined in Lemma~\ref{lemmaleo}. These functions do not provide the desired decomposition; in fact, we have
\begin{eqnarray*}
\varhkn f_1 + \varhkn f_2 & = & f_1(\mathbf{1})-f_1(\mathbf{0}) + f_2(\mathbf{1}) - f_2 (\mathbf{0}) \\
& = & \varhkn f + \varhkn f - f(\mathbf{1}) + f(\mathbf{0})\\
& = & 2 \varhkn f - f(\mathbf{1}) + f(\mathbf{0}),
\end{eqnarray*}
and while for any function $f$ of bounded HK$\mathbf{0}$-variation we have
\begin{equation} \label{dreie}
f(\mathbf{1})-f(\mathbf{0}) \leq \varhkn f,
\end{equation}
there is in general no equality in~\eqref{dreie}. Consequently, the sum of the variations of the functions $f_1$ and $f_2$ from Lemma~\ref{lemmaleo} is in general larger than the variation of $f$.\\

Instead, we define the functions
\begin{eqnarray}
f^+(\mathbf{x}) & = & \frac{1}{2} \left(\varhkn(f;[\mathbf{0},\mathbf{x}]) + f(\mathbf{x}) - f(\mathbf{0}) \right),  \label{f+-} \\
f^-(\mathbf{x}) & = & \frac{1}{2} \left(\varhkn(f;[\mathbf{0},\mathbf{x}]) - f(\mathbf{x}) + f(\mathbf{0}) \right) \label{f+-2}
\end{eqnarray}
for $\mathbf{x} \in [0,1]^d$. Then obviously we have $f(\mathbf{x})=f(\mathbf{0})+f^+(\mathbf{x}) - f^-(\mathbf{x})$ for every $\mathbf{x}$. Furthermore, since the function $f_2$ in Lemma~\ref{lemmaleo} is completely monotone, the same is true for the function $f^-$ in~\eqref{f+-2}. Now set $g=-f$. Then it is easily seen that for any $\mathbf{x}$ we have $\varhkn(g;[\mathbf{0},\mathbf{x}])=\varhkn(f;[\mathbf{0},\mathbf{x}])$. Applying Lemma~\ref{lemmaleo} to $g$ we see that the function 
$$
\varhkn(g;[\mathbf{0},\mathbf{x}])-g(\mathbf{x}) = \varhkn(f;[\mathbf{0},\mathbf{x}])+f(\mathbf{x}) = 2 f^+(\mathbf{x}) + f(\mathbf{0})
$$
is completely monotone, which implies that the function $f^+$ is also completely monotone. Thus both $f^+$ and $f^-$ are completely monotone, and we have
\begin{eqnarray*}
\varhkn f^+ & = & f^+(\mathbf{1})-f^+(\mathbf{0}) \\
& = & \frac{1}{2} (\varhkn f + f(\mathbf{1})-f(\mathbf{0}))
\end{eqnarray*}
and similarly
\begin{eqnarray*}
\varhkn f^- = \frac{1}{2} (\varhkn f - f(\mathbf{1})+f(\mathbf{0})),
\end{eqnarray*}
which proves that
$$
\varhkn f = \varhkn f^+ + \varhkn f^-.
$$
It is easily seen that $f^+(\mathbf{0})=f^-(\mathbf{0})=0$, and thus the functions $f^+$ and $f^-$ from~\eqref{f+-} and~\eqref{f+-2} have the properties requested in Theorem~\ref{thbv2}.\\

It remains to show that this decomposition is the only one satisfying the statement of Theorem~\ref{thbv2}. Thus, suppose that $g^+$ and $g^-$ are two completely monotone functions such that $f(\mathbf{x})=f(\mathbf{0}) + g^+ (\mathbf{x}) -g^-(\mathbf{x})$ for every $\mathbf{x}$ and $g^+(\mathbf{0})=g^-(\mathbf{0})=0$. Then for every $\mathbf{x}$ we have
\begin{eqnarray}
g^+(\mathbf{x})+g^-(\mathbf{x}) & = & \varhkn (g^+;[\mathbf{0},\mathbf{x}]) +\varhkn (g^-;[\mathbf{0},\mathbf{x}]) \nonumber\\
& \geq & \varhkn (f;[\mathbf{0},\mathbf{x}]) \label{triin} \\
& = & f^+(\mathbf{x}) + f^-(\mathbf{x}), \nonumber
\end{eqnarray}
where~\eqref{triin} follows from the triangle inequality for the HK$\mathbf{0}$-variation, that is, from~\eqref{trian}. By adding $f^+-f^-=g^+-g^-$ to each line of this inequality we obtain
\begin{equation} \label{grea}
g^+(\mathbf{x}) \geq f^+(\mathbf{x}), \qquad \mathbf{x} \in [0,1]^d,
\end{equation}
and by subtracting the same quantity from each line we similarly get
\begin{equation} \label{grea2}
g^-(\mathbf{x}) \geq f^-(\mathbf{x}), \qquad \mathbf{x} \in [0,1]^d.
\end{equation}
Suppose that there exists a point $\mathbf{x} \in [0,1]^d$ such that $g^+(\mathbf{x}) \neq f^+(\mathbf{x})$. By~\eqref{grea} this implies $g^+(\mathbf{x}) > f^+(\mathbf{x})$, which together with~\eqref{grea2} and the assumption that $g^+(\mathbf{0})=g^-(\mathbf{0})=0$ also implies
\begin{equation} \label{grea3}
\varhkn (g^+;[\mathbf{0},\mathbf{x}]) + \varhkn (g^-;[\mathbf{0},\mathbf{x}]) = g^+(\mathbf{x}) + g^-(\mathbf{x}) > \varhkn (f;[\mathbf{0},\mathbf{x}]).
\end{equation}
By Lemma~\ref{lemmatri} we have 
\begin{eqnarray*}
& & \varhkn(g^+;[\mathbf{0},\mathbf{1}])+\varhkn(g^-;[\mathbf{0},\mathbf{1}])-\varhkn(g^+;[\mathbf{0},\mathbf{x}])-\varhkn(g^-;[\mathbf{0},\mathbf{x}]) \\
& \geq & \varhkn(f;[\mathbf{0},\mathbf{1}]) - \varhkn(f;[\mathbf{0},\mathbf{x}]).
\end{eqnarray*}
Combining this with~\eqref{grea3} we have
$$
\varhkn g^+ +\varhkn g^- >\varhkn f.
$$
Thus the decomposition of $f$ into $g^+$ and $g^-$ violates~\eqref{sumofv}, which means that it does not have the properties requested in Theorem~\ref{thbv2}. Consequently, the decomposition of $f$ described in Theorem~\ref{thbv2} is unique.
\end{proof}

We note that the functions $f^+$ and $f^-$ are the \emph{positive variation} and the \emph{negative variation} of $f$, respectively. They could be also defined by taking into consideration only the positive or only the negative contributions in~\eqref{vitali}, respectively, instead of taking absolute values. However, this aspect is not important for our paper, so we do not pursue it any further.

\begin{proof}[Proof of Theorem~\ref{thbv} (a)]
Assume that $f$ is a right-continuous function on $[0,1]^d$ which has bounded HK$\mathbf{0}$-variation. Let $f^+,f^-$ be the functions in the Jordan decomposition of $f$ as in Theorem~\ref{thbv2}. As noted, both $f^+$ and $f^-$ are completely monotone.\\

Now we will show that $f^+$ and $f^-$ are right-continuous. We define functions $\tilde{f^+}$ and $\tilde{f^-}$ by setting
\begin{equation} \label{limits}
\tilde{f^+}(\mathbf{x}) = \lim_{\ve \searrow 0} f^+(x_1+\ve,x_2,\dots,x_d), \qquad \tilde{f^-}(\mathbf{x}) = \lim_{\ve \searrow 0} f^-(x_1+\ve,x_2,\dots,x_d)
\end{equation}
for $\mathbf{x} = (x_1,\dots,x_d) \in [0,1)^d$ and $\tilde{f^+}(\mathbf{x}) = f^+(\mathbf{x})$ and $\tilde{f^-}(\mathbf{x})=f^-(\mathbf{x})$ for $\mathbf{x} \in [0,1]^d \backslash [0,1)^d$. Note that the limits in~\eqref{limits} exist since $f^+$ and $f^-$ are monotone in every coordinate and bounded. By construction, the functions $\tilde{f^+}$ and $\tilde{f^-}$ are right-continuous in the first coordinate, at every point. Also, both functions $\tilde{f^+}$ and $\tilde{f^-}$ are completely monotone (this property is inherited from $f^+$ and $f^-$, respectively). Furthermore $\varhkn \tilde{f^+} = \tilde{f^+}(\mathbf{1}) - \tilde{f^+}(\mathbf{0}) \leq f^+(\mathbf{1}) - f^+(\mathbf{0})= \varhkn f^+$, and a similar inequality holds for $\tilde{f^-}$. We also have
\begin{eqnarray}
\tilde{f^+}(\mathbf{x}) - \tilde{f^-}(\mathbf{x}) & = & \lim_{\ve \searrow 0} \left( f^+(x_1+\ve,x_2,\dots,x_d) - f^-(x_1+\ve,x_2,\dots,x_d) \right) \nonumber\\
& = & \lim_{\ve \searrow 0} f(x_1+\ve,x_2,\dots,x_d) - f(\mathbf{0}) \label{rk1}\\
& = & f(\mathbf{x}) - f(\mathbf{0}),\label{rk2}
\end{eqnarray}
where we used the fact that $f$ is right-continuous to get from~\eqref{rk1} to~\eqref{rk2}. Thus by~\eqref{trian} we must actually have
\begin{equation} \label{varand}
\varhkn \tilde{f^+} = \varhkn f^+ \qquad \textrm{and} \qquad \varhkn \tilde{f^-} = \varhkn f^-,
\end{equation}
and
$$
\varhkn \tilde{f^+} + \varhkn \tilde{f^-} = \varhkn f.
$$
Since by construction $\tilde{f^+}(\mathbf{1})=f^+(\mathbf{1})$ and $\tilde{f^-}(\mathbf{1})=f^-(\mathbf{1})$, and since the functions $\tilde{f^+}$ and $\tilde{f^-}$ are completely monotone, by~\eqref{varand} we have
$$
\tilde{f^+}(\mathbf{0}) = \tilde{f^+} (\mathbf{1}) - \varhkn \tilde{f^+} = f^+ (\mathbf{1}) - \varhkn f^+ = f^+ (\mathbf{0}) = 0,
$$
and similarly $\tilde{f^-}(\mathbf{0})=0$. Overall, the functions $\tilde{f^+}$ and $\tilde{f^-}$ have all the properties requested from the decomposition in Theorem~\ref{thbv2}. However, since this decomposition is unique, we must have $\tilde{f^+}=f^+$ and $\tilde{f^-}=f^-$. In other words, the functions $f^+$ and $f^-$ must already be right-continuous in the first coordinate. The same argument can be applied to show that $f^+$ and $f^-$ must be right-continuous in all other coordinates as well.\\

We can define a set-function $\nu_{f}^+$ on the elements of $\mathcal{A}^*$ by setting
$$
\nu_{f}^+ ([\mathbf{0},\mathbf{x}]) = f^+(\mathbf{x}) \qquad \textrm{for $\mathbf{x} \in [0,1]^d$}.
$$
This function can be extended in a unique way into a countably additive set-function on the algebra of all finite unions of elements of $\mathcal{A}^*$ (here it is important that $f^+$ is right-continuous, to ensure that the resulting set-function is countably additive). Finally, by the Caratheodory extension theorem, this countably additive set-function can be extended in a unique way into a measure $\nu_f^+$ on the sigma-field generated by $\mathcal{A}^*$. Since the sigma-field generated by $\mathcal{A}^*$ consists of the Borel sets on $[0,1]^d$, the measure $\nu_f^+$ is a Borel measure. All the necessary extension theorems for this construction are contained in Chapter~5 of~\cite{yeh}; however, this is just the standard construction how a multivariate distribution function (namely the function $f^+$) defines a measure, as described for example in Chapter 3 of~\cite{kho}. In the same way, we can construct a measure $\nu_{f}^-$ from $f^-$. Note that both $\nu_{f}^+$ and $\nu_{f}^-$ are finite.\\

Let $\delta_{f(\mathbf{0})}$ be the signed Borel measure on $[0,1]^d$ for which
$$
\delta_{f(\mathbf{0})}(A) = \left\{ \begin{array}{ll} f(\mathbf{0}) & \textrm{if $\mathbf{0} \in A$}\\0 & \textrm{otherwise.}\end{array}\right.
$$
We define
\begin{equation} \label{decom}
\nu_f = \nu_{f}^+ - \nu_{f}^- + \delta_{f(\mathbf{0})}.
\end{equation}
Then $\nu_f$ is a finite signed Borel measure, and we have
$$
\nu_f([\mathbf{0},\mathbf{x}]) = f^+(\mathbf{x}) -f^-(\mathbf{x}) + f(\mathbf{0}) = f(\mathbf{x}) \qquad \textrm{for} \qquad \mathbf{x} \in [0,1]^d.
$$ 
By the Jordan Decomposition Theorem (see the end of the previous section) there exist measures $\nu^+$ and $\nu^-$ such that $\nu^+ \perp \nu^-$ and $\nu_f = \nu^+ - \nu^-$. Furthermore there exist two (Borel-)sets $C^+,C^-$ such that $[0,1]^d = C^+ \cup C^-$ and $C^+ \cap C^- = \emptyset$, and such that $\nu^-(C^+)=0$ and $\nu^+(C^-)=0$. It is not a priori clear that the Jordan decomposition of the measure $\nu_f$ is in direct correspondence with the Jordan decomposition of the function $f$, that $\nu_f^+ \perp \nu_f^-$, and that~\eqref{decom} already gives the Jordan representation of $\nu_f-\delta_{f(\mathbf{0})}$. However, we will now show that this actually is the case.\\

Let $(\nu^+,\nu^-)$ be the Jordan decomposition of $\nu_f$. We define functions $g^+(\mathbf{x})$ and $g^-(\mathbf{x})$ on $[0,1]^d$ by setting
$$
g^+(\mathbf{x}) = \nu^+([\mathbf{0},\mathbf{x}] \backslash \{\mathbf{0}\}), \qquad \mathbf{x} \in [0,1]^d,
$$
and
$$
g^-(\mathbf{x}) = \nu^-([\mathbf{0},\mathbf{x}] \backslash \{\mathbf{0}\})), \qquad \mathbf{x} \in [0,1]^d.
$$
Then it is easily seen that $g^+$ and $g^-$ are completely monotone functions, and that we have $f(\mathbf{x})=f(\mathbf{0}) + g^+ (\mathbf{x}) - g^-(\mathbf{x})$ for all $\mathbf{x} \in [0,1]^d$. Furthermore $g^+(\mathbf{0})=g^-(\mathbf{0})=0$. Let $C^+,C^-$ be the two sets from above. Then
\begin{eqnarray*}
\varhkn g^+ & = & g^+(\mathbf{1}) - \underbrace{g^+(\mathbf{0})}_{=0} \\
& = & \nu^+([\mathbf{0},\mathbf{1}] \backslash \{\mathbf{0}\})) \\
& = & \nu^+(C^+ \backslash \{\mathbf{0}\}) \\
& = & \nu_f(C^+ \backslash \{\mathbf{0}\}) \\
& = & \nu_f^+(C^+) - \nu_f^-(C^+) \\
& \leq & \nu_f^+([\mathbf{0},\mathbf{1}]) \\
& = & f^+(\mathbf{1}) \\
& = & \varhkn f^+.
\end{eqnarray*}
Similarly we obtain $\varhkn g^- \leq  \varhkn f^-$, which implies
\begin{eqnarray} 
\varhkn g^+ + \varhkn g^- & \leq & \varhkn f^+ + \varhkn f^- \label{equin} \\
& = & \varhkn f.\nonumber
\end{eqnarray}
By~\eqref{trian} the inequality sign in~\eqref{equin} must actually be an equality sign. Consequently, the decomposition $f(\mathbf{x})=f(\mathbf{0}) + g^+ (\mathbf{x}) - g^-(\mathbf{x})$ is a decomposition having all the properties described in Theorem~\ref{thbv2}. Now since the decomposition in Theorem~\ref{thbv2} is unique, this implies that $f^+=g^+$ and $f^-=g^-$. As a consequence, we have 
$$
\vart \nu_f = \nu^+\left([0,1]^d\right) + \nu^-\left([0,1]^d\right) = f^+(\mathbf{1}) + f^-(\mathbf{1}) + |f(\mathbf{0})| = \varhkn f + |f(\mathbf{0})|.
$$
This proves part (a) of the theorem.
\end{proof}

\begin{proof}[Proof of Theorem~\ref{thbv} (b)]
Assume that a finite signed measure $\nu$ on $[0,1]^d$ is given. Then by the Jordan decomposition theorem there exist two finite measures $\nu^+$ and $\nu^-$ such that $\nu=\nu^+ - \nu^-$, and such that $\nu^+ \perp \nu^-$. We define two functions $f^+,f^-$ on $[0,1]^d$ by setting
$$
f^+(\mathbf{x}) = \nu^+\left([\mathbf{0},\mathbf{x}] \backslash \{\mathbf{0}\}\right), \qquad f^-(\mathbf{x}) = \nu^-\left([\mathbf{0},\mathbf{x}] \backslash \{\mathbf{0}\}\right), \qquad \mathbf{x} \in [0,1]^d.
$$
Then $f^+$ and $f^-$ are two finite, right-continuous, completely monotone functions. Furthermore, the function $f(\mathbf{x})=f^+(\mathbf{x})-f^-(\mathbf{0})+\nu(\{\mathbf{0}\}),~\mathbf{x} \in [0,1]^d,$ is a right-continuous function of bounded HK$\mathbf{0}$-variation (since it can be written as the difference of two finite, completely monotone functions; see~\cite[Corollary 3]{leon}). As explained in part (a) of this proof, the function $f$ defines measures $\nu_f^+$ and $\nu_f^-$ and a finite signed measure $\nu_f$. It is then easily seen that $\nu_f$ coincides with $\nu$, that the pair $(\nu_f^+,\nu_f^-)$ is the unique Jordan decomposition of $\nu_f - \delta_{f(\mathbf{0})}$, and that consequently $f^+$ and $f^-$ are the Jordan decomposition of the function $f$. As a consequence we have
\begin{eqnarray*}
\varhkn f & = & \varhkn f^+ + \varhkn f^- \\
& = & \nu^+\left([0,1]^d \backslash\{\mathbf{0}\}\right) + \nu^-\left([0,1]^d \backslash\{\mathbf{0}\}\right) \\
& = & \vart \nu - \underbrace{|\nu(\{\mathbf{0}\})|}_{=|f(\mathbf{0})|},
\end{eqnarray*}
which proves the theorem.
\end{proof}

\begin{proof}[Proof of Lemma~\ref{lemmahk0}]
Let a function $f$ on $[0,1]^d$ be given, and assume that $f$ has bounded HK$\mathbf{0}$-variation. In the first step we assume that $f$ is right-continuous. Then by Theorem~\ref{thbv2} and Theorem~\ref{thbv} there exist completely monotone functions $f^+$ and $f^-$ such that~\eqref{sumofv} holds, and there exists a signed measure $\nu$ which, together with its Jordan decomposition $\nu = \nu^+ - \nu^-$, satisfies~\eqref{ths1}-\eqref{ths3}.\\

To calculate the HK-variation of $f$, we have to calculate its variation in the sense of Vitali on faces of the form~\eqref{face1}. However, the situation becomes much easier if we separately calculate the variations in the sense of Vitali of $f^+$ and $f^-$ instead. For a completely monotone function $h$ we have
\begin{equation} \label{vdequ}
V^{(d)}\left(h;[0,1]^d\right) = \Delta^{(d)}\left(h;[0,1]^d\right).
\end{equation}
This equality follows from the fact that for a completely monotone function all summands in the sum in~\eqref{vitali} are non-negative, and consequently this sum is a sort of telescoping sum, no matter which partition $\mathcal{P}$ is chosen  (alternatively, this equality may be deduced from the fact that it trivially holds in the one-dimensional case, and that the $d$-dimensional difference operator $\Delta^{(d)}$ actually is the composition of $d$ one-dimensional difference operators). For the same reason, the same equality as~\eqref{vdequ} holds for the $s$-dimensional variation in the sense of Vitali $V^{(s)}$ on every $s$-dimensional face of $[0,1]^d$, for $1 \leq s \leq d$. In particular we have
\begin{equation} \label{vdequ2}
V^{(s)}\left(h;U_d^{(i_1,\dots,i_s)}\right) = \Delta^{(s)}\left(h;U_d^{(i_1,\dots,i_s)}\right),
\end{equation}
provided $h$ is completely monotone.\\

A little combinatorial reasoning now shows that the representation~\eqref{ths3} implies that for any $s$-dimensional face $U_d^{(i_1,\dots,i_s)}$ of the form~\eqref{face1}, $1 \leq s \leq d$, we have
$$
\Delta^{(s)}\left(f^+;U_d^{(i_1,\dots,i_s)}\right) = \nu^+ \left(\left\{ (x_1,\dots,x_d) \in [0,1]^d:~ x_{i_1}>0,x_{i_2}>0,\dots,x_{i_s}>0 \right\} \right)
$$
(this is how a distribution function is used to calculate the measure of a half-open axis-parallel box). Together with~\eqref{vdequ2} this implies
\begin{eqnarray}
V^{(s)}\left(f^+;U_d^{(i_1,\dots,i_s)}\right) & \leq & \nu^+ \left(\left\{ (x_1,\dots,x_d) \in [0,1]^d:~ x_{i_1}>0,x_{i_2}>0,\dots,x_{i_s}>0 \right\} \right) \nonumber\\
& \leq & \nu^+([0,1]^d \backslash \{\mathbf{0}\}) ~=~ f^+(\mathbf{1}) ~=~ \varhkn f^+.  \label{nubou}
\end{eqnarray}
We note that the number of summands in the definition of the HK-variation is $2^d-1$. Since by~\eqref{nubou} each of these summands is bounded by $\nu^+([0,1]^d)$, we have
$$
\varhk f^+ \leq \left(2^d -1\right) \varhkn f^+.
$$
In a similar way we obtain
$$
\varhk f^- \leq \left(2^d -1\right) \varhkn f^-.
$$
Using~\eqref{sumofv} and~\eqref{trian2} we obtain
$$
\varhk f \leq \varhk f^+ + \varhk f^- \leq \left(2^d-1\right) \varhkn f,
$$
which proves the lemma under the additional assumption that $f$ is right-continuous.\\

Without assuming that $f$ is right-continuous we still can use Theorem~\ref{thbv2} to find a Jordan decomposition $f=f^+-f^-$ of $f$, but the functions $f^+$ and $f^-$ are (in general) not right-continuous and consequently cannot be used to define a measure (as in Theorem~\ref{thbv}). However, as~\eqref{vdequ} and~\eqref{vdequ2} show, the variation in the sense of Vitali (on $[0,1]^d$ as well as on lower-dimensional faces) of a completely monotone function $h$ depends only on the values of $h$ at the \emph{corners} of $[0,1]^d$; and consequently the same must be true for the HK$\mathbf{0}$-variation and the HK-variation of $h$. We set
$$
\overline{f^+}(x_1,\dots,x_d) = f(\tau(x_1),\dots,\tau(x_d)), \qquad \textrm{for $\mathbf{x}=(x_1,\dots,x_d) \in [0,1]^d$},
$$
where $\tau(y) = 0$ for $0\leq y < 1$ and $\tau(1)=1$. Informally speaking, the value of $\overline{f^+}$ at a point $\mathbf{x}$ is the value of $f$ at the maximal corner of $[0,1]^d$ which is $\leq \mathbf{x}$. Trivially $\overline{f^+}$ coincides with $f^+$ on all corners of $[0,1]^d$, and it is easy to see that $\overline{f^+}$ is also completely monotone. Furthermore, $\overline{f^+}(\mathbf{0})=0$ and $\overline{f^+}$ is right-continuous. Thus, since we have already shown the lemma for right-continuous functions, we have
$$
\underbrace{\varhk \overline{f^+}}_{= \varhk f^+} \leq \left(2^d-1\right) \underbrace{\varhkn \overline{f^+}}_{=\varhkn f^+}.
$$
A similar inequality holds for $f^-$. Together with~\eqref{sumofv} and~\eqref{trian2} this proves the lemma also in the case when $f$ is not right-continuous.\\

To show that the statement of the lemma also holds when the role of the HK$\mathbf{0}$-variation and the HK-variation are interchanged, we define $g(\mathbf{x})=f(\mathbf{1}-\mathbf{x})$ for $\mathbf{x} \in [0,1]^d$. Then by~\eqref{fgrel} and by the version of the lemma which we proved above we have
$$
\varhkn f = \varhk g \leq \left(2^d-1\right) \varhkn g = \left(2^d-1\right) \varhk f.
$$
\end{proof}

\section{A Koksma--Hlawka inequality for general measures:~a proof of Theorem~\ref{th1}} \label{seckh}

The proof of Theorem~\ref{th1} follows similar proofs in~\cite[Theorem C.1.4]{bl} and~\cite[Theorem 3.2]{okter}.\\

Let $\mathbf{x}_1, \dots, \mathbf{x}_N$ be given. Throughout the proof we may assume without loss of generality that $f(\mathbf{1})=0$ (since otherwise we may replace $f(\mathbf{x})$ by $f(\mathbf{x})-f(\mathbf{1})$, which changes neither the left-hand side nor the right-hand side of~\eqref{th1equ}).\\

In a first step, we assume that $f$ is left-continuous. We define the function $g(\mathbf{x})=f(\mathbf{1}-\mathbf{x})$ for $\mathbf{x} \in [0,1]^d$. Since we assumed that $f$ is left-continuous and $f(\mathbf{1})=0$, this clearly implies that $g$ is right-continuous and $g(\mathbf{0})=0$. Furthermore, by~\eqref{fgrel}, we have $\varhkn g = \varhk f$. Now we apply Theorem~\ref{thbv} to the function $g$. Let $\nu$ be the signed measure from Theorem~\ref{thbv} which is defined by $g$. Let $\hat{\nu}$ be the ``reflected'' measure of $\nu$, which satisfies
$$
\hat{\nu} (A) = \nu(\mathbf{1}-A),
$$
for any Borel set $A \subset [0,1]^d$, where we write $\mathbf{1}-A = \left\{\mathbf{1} - \mathbf{x}:~\mathbf{x} \in A \right\}$. It is easily verified that the fact that $\nu$ is a signed Borel measure implies that $\hat{\nu}$ is a signed Borel measure as well, and that they have the same total variation. Let $|\hat{\nu}|$ be the variation measure of $\hat{\nu}$ (see the end of Section~\ref{secdef}); then according to the previous remarks and Theorem~\ref{thbv} we have
\begin{eqnarray} 
\vart \hat{\nu} & = & \varhkn g + |g(\mathbf{0})| \nonumber\\
& = & \varhk f.\label{varhat}
\end{eqnarray}
Now on the one hand we have
\begin{eqnarray*}
\frac{1}{N}\sum_{n=1}^N f(\mathbf{x}_k) & = & \frac{1}{N} \sum_{n=1}^N g(\mathbf{1}-\mathbf{x}_k) \\
& = & \int_{[0,1]^d} \frac{1}{N} \sum_{n=1}^N \mathds{1}_{[\mathbf{0},\mathbf{1}-\mathbf{x}_n]} (\mathbf{y}) d\nu(\mathbf{y}) \\
& = & \int_{[0,1]^d} \frac{1}{N} \sum_{n=1}^N \mathds{1}_{[\mathbf{x_n},\mathbf{1}]} (\mathbf{y}) d\hat{\nu}(\mathbf{y}) \\
& = &  \int_{[0,1]^d} \frac{1}{N} \sum_{n=1}^N \mathds{1}_{[\mathbf{0},\mathbf{y}]} (\mathbf{x}_n) d\hat{\nu}(\mathbf{y}). 
\end{eqnarray*}
On the other hand, in a similar way, by Fubini's theorem we have
\begin{eqnarray*}
\int_{[0,1]^d} f(\mathbf{x}) d\mu(\mathbf{x}) & = & \int_{[0,1]^d} \int_{[0,1]^d} \mathds{1}_{[\mathbf{0},\mathbf{1}-\mathbf{x}]}(\mathbf{y}) d\nu(\mathbf{y}) d\mu(\mathbf{x}) \\
& = & \int_{[0,1]^d} \int_{[0,1]^d} \mathds{1}_{[\mathbf{x},\mathbf{1}]}(\mathbf{y}) d\hat{\nu}(\mathbf{y}) d\mu(\mathbf{x}) \\
& = & \int_{[0,1]^d} \int_{[0,1]^d} \mathds{1}_{[\mathbf{0},\mathbf{y}]}(\mathbf{x}) d\hat{\nu}(\mathbf{y}) d\mu(\mathbf{x})\\
& = & \int_{[0,1]^d} \int_{[0,1]^d} \mathds{1}_{[\mathbf{0},\mathbf{y}]}(\mathbf{x}) d\mu(\mathbf{x}) d\hat{\nu}(\mathbf{y}) \\
& = & \int_{[0,1]^d} \mu([\mathbf{0},\mathbf{y}]) d\hat{\nu}(\mathbf{y}).
\end{eqnarray*}
Consequently
\begin{eqnarray*}
\left| \frac{1}{N}\sum_{n=1}^N f(\mathbf{x}_k)  - \int_{[0,1]^d} f(\mathbf{x}) d\mu(\mathbf{x}) \right| & \leq & \int_{[0,1]^d} \underbrace{\left| \frac{1}{N} \sum_{n=1}^N \mathds{1}_{[\mathbf{0},\mathbf{y}]} (\mathbf{x}_n) - \mu([\mathbf{0},\mathbf{y}]) \right|}_{\leq D_N^*(\mathbf{x}_1, \dots, \mathbf{x}_N;\mu)} d|\hat{\nu}|(\mathbf{y}) \\
& \leq & D_N^*(\mathbf{x}_1, \dots, \mathbf{x}_N;\mu) ~\vart \hat{\nu}.
\end{eqnarray*}
Together with~\eqref{varhat} this proves the Theorem in the case that $f$ is left-continuous.\\

Now we show that we can reduce the general case to the case of $f$ being left-continuous. Let $f$ be given. By the strong law of large numbers and by the multidimensional Glivenko--Cantelli theorem (see for example~\cite[Chapter 26]{sw}) for any $\ve>0$ there exist a number $M$ and points $\mathbf{y}_1, \dots, \mathbf{y}_M \in [0,1]^d$ such that the two inequalities
\begin{equation} \label{deps2}
\left| \frac{1}{M} \sum_{m=1}^M f(\mathbf{y}_m) - \int_{[0,1]^d} f(\mathbf{x})~d\mu \right| \leq \ve
\end{equation}
and
\begin{equation} \label{deps}
D_N^*(\mathbf{y}_1, \dots, \mathbf{y}_M;\mu) \leq \ve
\end{equation}
are both satisfied. Set
$$
\mathcal{G} = \{\mathbf{0},\mathbf{1},\mathbf{x}_1,\dots,\mathbf{x}_N,\mathbf{y}_1,\dots,\mathbf{y}_M\},
$$
and let $\mathcal{H}$ be the $d$-dimensional grid that is generated by the elements of $\mathcal{G}$; that is, the set of all points in $\mathbf{z} \in [0,1]^d$ such that the $j$-th coordinate of $\mathbf{z}$ appears as the $j$-th coordinate of an element of $\mathcal{G}$, for $1 \leq j \leq d$. For $\mathbf{x} \in [0,1]^d$, let $\textup{succ}(\mathbf{x})$ denote the uniquely defined element $\mathbf{z}$ of $\mathcal{H}$ for which $\mathbf{x} \leq \mathbf{z}$ and for which $\mathbf{z} \leq \mathbf{y}$ for all $\mathbf{y} \in \mathcal{H}:~\mathbf{x} \leq \mathbf{y}$. Informally speaking, $\textup{succ}(\mathbf{x})$ is the smallest element of $\mathcal{H}$ which is $\geq \mathbf{x}$ (that is, $\textup{succ}(\mathbf{x})$ is the \emph{successor} of $\mathbf{x}$ within $\mathcal{H}$). We define a function $\tilde{f}$ by setting
$$
\tilde{f}(\mathbf{x}) = f(\textup{succ}(\mathbf{x})), \qquad \textbf{x} \in [0,1]^d.
$$
Note that by construction $\tilde{f}(\mathbf{z})=f(\mathbf{z})$ for all points $z \in \mathcal{G}$. Furthermore, by construction the function $\tilde{f}$ is left-continuous. Additionally, it is easily seen that $\varhk \tilde{f} \leq \varhk f$. Since we have already proved the theorem for left-continuous functions, we get
\begin{eqnarray}
& & \left| \frac{1}{N} \sum_{n=1}^N \underbrace{f(\mathbf{x}_n)}_{=\tilde{f}(\mathbf{x}_n)} - \int_{[0,1]^d} f(\mathbf{x})~d\mu \right| \nonumber\\
& \leq & \left| \frac{1}{N} \sum_{n=1}^N \tilde{f}(\mathbf{x}_n) - \int_{[0,1]^d} \tilde{f}(\mathbf{x})~d\mu \right| + \left|\int_{[0,1]^d} \tilde{f}(\mathbf{x})~d\mu - \frac{1}{M} \sum_{m=1}^M \underbrace{\tilde{f}(\mathbf{y}_m)}_{=f(\mathbf{y}_m)} \right| \label{term1}\\
& & + \left| \frac{1}{M} \sum_{m=1}^M f(\mathbf{y}_m) - \int_{[0,1]^d} f(\mathbf{x})~d\mu \right|. \label{term2}
\end{eqnarray}
The first term in~\eqref{term1} is at most $(\varhk \tilde{f}) D_N^*(\mathbf{x}_1,\dots,\mathbf{x}_N;\mu)$, since $\tilde{f}$ is left-continuous. The second term in~\eqref{term1} is at most $\ve \varhk \tilde{f}$, also since $\tilde{f}$ is left-continuous and by~\eqref{deps}. Finally, the term in~\eqref{term2} is at most $\ve$ by~\eqref{deps2}. Since $\varhk \tilde{f} \leq \varhk f$ and since $\ve>0$ was arbitrary, this proves the Theorem.

\section{Transformations of point sets and Chelson's general Koksma--Hlawka inequality} \label{sectrans}

In this section we will present the transformation method proposed by Chelson~\cite{chelson}, which supposedly transforms a low-discrepancy point set with respect to the uniform measure into a low discrepancy point set with respect to a general measure $\mu$. We will show, contrary to what is claimed in~\cite{chelson}, that this transformation method generally fails, and only gives the desired result in the case when $\mu$ is of product type.\\

Before turning to Chelson's method, we want to note that the problem of transforming a low-discrepancy sequence with respect to the uniform measure into a low-discrepancy sequence with respect to another measure $\mu$ has been considered by several other authors, for example in~\cite{hk,hm1,hm2,sn}. Let $\mathbf{x}_1,\dots,\mathbf{x}_N$ be a point set in $[0,1]^d$. If $\mu$ is of product type, that is if it is the $d$-dimensional product measure of $d$ one-dimensional measures, and if $\mu$ has a density (with respect to $\lambda$), then in~\cite{hk,hm2} transformation methods are presented which (in a computationally tractable way) generate a sequence $\mathbf{y}_1,\dots,\mathbf{y}_N \in [0,1]^d$ such that
$$
D_N^*(\mathbf{y}_1,\dots,\mathbf{y}_N;\mu) \leq c(d,\mu) D_N^*(\mathbf{x}_1,\dots,\mathbf{x}_N).
$$
In the case of more general measures, the known results are much less satisfactory. Even under some technical assumptions on $\mu$, the best known transformation~\cite{hm2} only gives
$$
D_N^*(\mathbf{y}_1,\dots,\mathbf{y}_N;\mu) \leq c(d,\mu) \left(D_N^*(\mathbf{x}_1,\dots,\mathbf{x}_N)\right)^{1/d}.
$$
Chelson claims that even in the general case one can reach
$$
D_N^*(\mathbf{y}_1,\dots,\mathbf{y}_N;\mu) = D_N^*(\mathbf{x}_1,\dots,\mathbf{x}_N),
$$
but as noted his ``proof'' of this assertion is incorrect and must be dismissed.\\

Now we turn to the description of the transformation method suggested in~\cite{chelson}. We change the notation (in such a way that it fits together with the rest of this paper) and simplify some statements, but our exposition is a truthful re-narration of the presentation in~\cite{chelson}. Let $g(y_1,\dots,y_d)$ be a probability density on $[0,1]^d$, let $G(y_1,\dots,y_d)$ be its distribution function, and let $\mu_G$ be the corresponding probability measure. We require that $g$ is non-zero on $[0,1]^d$. Let $g_1$ be defined by
$$
g_1(y_1) = \underbrace{\int_0^1 \dots \int_0^1}_{d-1 \textrm{~integrals}} g(y_1,\dots,y_d)~dy_2 \dots dy_d;
$$
that is, $g_1$ is the marginal density for $y_1$. Let $G_1(y_1)$ be the distribution function of $g_1$, and let $G_1^{-1}$ be its inverse (which exists because $g$ is positive). Similarly, for $2 \leq s < d$, let
$$
g_{1,\dots,s} (y_1,\dots,y_s) = \underbrace{\int_0^1 \dots \int_0^1}_{d-s \textrm{~integrals}} g(y_1,\dots,y_s) dy_{s+1}\dots dy_{d}
$$
be the marginal density for $y_1, \dots, y_s$. Note that $g_{1,\dots,d}=g$. Then, for $1 \leq s \leq d$, let
$$
g_s (y_s) = g(y_s|y_1,\dots,y_{s-1}) = \frac{g_{1,\dots,s}(y_1,\dots,y_s)}{g_{1,\dots,s-1}(y_1,\dots,y_{s-1})}.
$$
Furthermore, let $G_s(y_s)$ be the (conditional) distribution function of $g_s(y_s)$, that is
\begin{equation} \label{gsdef}
G_s(y_s) = G_s(y_s|y_1,\dots,y_{s-1}) = \int_0^{y_s} g(u_s|y_1,\dots,y_{s-1})~du_s.
\end{equation}
Let $G_s^{-1}$ denote the inverse of $G_s$.\\

Now Chelson introduced the transformation as follows:
\begin{quote}
Let $\mathbf{x}=(x_1,\dots,x_d)\in [0,1]^d$ be given. First, set $z_1=G_1^{-1}(x_1)$. Then set $z_2=G_2^{-1}(x_2)$, and so on, until $z_d=G_d^{-1}(x_d)$. This gives a number $\mathbf{z}=(z_1,\dots,z_d)\in[0,1]^d$. Note that the values $z_1,z_2,\dots$ must be calculated sequentially, since each depends on those which are already chosen.\footnote{The dependent nature of the inverse functions is suppressed in Chelson's notation. What is meant is that $z_2=G_2^{-1}(x_2|z_1)$, \dots, $z_d=G_d^{-1}(x_d|z_1,\dots,z_{d-1})$, which explains why the values $z_1,\dots,z_d$ have to be calculated consecutively.} We write $T$ for the transformation which maps $\mathbf{x} \mapsto T \mathbf{x} = \mathbf{z}$ in this way.
\end{quote}
It is not mentioned in~\cite{chelson}, but this transformation is the \emph{Rosenblatt transformation}, which was introduced in~\cite{rosen}.\\

Chelson proves that if $X$ is a uniformly distributed random variable on $[0,1]^d$, then $Z=TX$ has distribution $\mu_G$. This is true, and was also shown in~\cite{rosen}. However, Chelson also claims the following (\cite[Theorem 2-5]{chelson}):
\begin{quote}
Let $\mathbf{x}_1,\dots,\mathbf{x}_N$ be a sequence in $[0,1]^d$, and let $\mathbf{z}_1=T\mathbf{x}_1,\dots,\mathbf{z}_N=T \mathbf{x}_N,$ be its image under the transformation $T$ described above. Then
\begin{equation} \label{discrch}
D_N^*(\mathbf{z}_1,\dots,\mathbf{z}_N;\mu_G)=D_N^*(\mathbf{x}_1,\dots,\mathbf{x}_N).
\end{equation}
\end{quote}
For the ``proof'', Chelson~\cite[p. 29]{chelson} argues as follows:
\begin{quote}
We define a vector-valued function $\tilde{G}(\mathbf{z}):~[0,1]^d \mapsto [0,1]^d$ by
\begin{equation} \label{tildeg}
\tilde{G}(\mathbf{z}) = (G_1(z_1),\dots,G_d(z_d)).
\end{equation}
Then, by construction, for any $\mathbf{a} \in [0,1]^d$
\begin{equation} \label{tildeg2}
\sum_{n=1}^N \mathds{1}_{[\mathbf{0},\mathbf{a}]} (\mathbf{z}_n) = \sum_{n=1}^N \mathds{1}_{\left[\mathbf{0},\tilde{G}(\mathbf{a})\right]} (\mathbf{x}_n),
\end{equation}
since by $\mathbf{z} \leq \mathbf{a}$ for $\mathbf{z} = T \mathbf{x}$ we mean
\begin{eqnarray*}
z_1 & = & G_1^{-1}(x_1) \leq a_1,\\
& \vdots & \\
z_d & = & G_d^{-1} (x_d) \leq a_d,
\end{eqnarray*}
and this occurs if and only if $x_s \leq G_s(a_s)$ for $1 \leq s \leq d$.
\end{quote}
This looks reasonable, but it is actually false. We will give a counterexample below. Chelson continues claiming that
\begin{quote}
\begin{equation} \label{tildeg3}
\mu_G([\mathbf{0},\mathbf{a}]) = \lambda\left(\left[\mathbf{0},\tilde{G}(\mathbf{a})\right]\right).
\end{equation}
\end{quote}
This is also false (see also the counterexample below). The error comes from treating the dependent functions 
$$
G_1(y_1),G_2(y_2|y_1),\dots,G_d(y_d|y_1,\dots,y_{d-1})
$$ 
as independent functions 
$$
G_1(y_1),G_2(y_2),\dots,G_d(y_d).
$$
In the definition~\eqref{gsdef} it is suggested that the dependent nature of $G_1,\dots,G_d$ is suppressed merely in order to shorten formulas; however, it seems that by suppressing the dependent nature of $G_1,\dots,G_d$ in the notation, Chelson made the error of simply ignoring this dependence, which of course is not justified.\footnote{To be more precise, the dependent nature of $G_1,\dots,G_d$ is only then irrelevant when $G$ is of product form; in this case the conditional one-dimensional distributions are just the (unconditional) one-dimensional distributions of the product representation themselves, and the transformation method works correctly; see Theorem~\ref{thinv} below.} A correct form of~\eqref{tildeg2} and~\eqref{tildeg3} would require the set $\{\tilde{G}(\mathbf{y}):~\mathbf{y} \in [\mathbf{0},\mathbf{a}]\}$ instead of $[\mathbf{0},\tilde{G}(\mathbf{a})]$ on the right-hand side of the equation; however, because of the dependent nature of $G_1,\dots,G_d$ the set $\{\
\tilde{G}(\mathbf{y}):~\mathbf{y} \in [\mathbf{0},\mathbf{a}]\}$ is in general \emph{not} an axis-parallel box, and consequently the discrepancy of $\mathbf{x}_1,\dots,\mathbf{x}_N$ cannot be used to estimate the number of elements of the original point set which are contained in such a set (see the example below).\\

In dimension $d=1$, Chelson's claim is right (there are no conditional distributions in this case). The simplest counterexample can be given for $d=2$ and $N=1$. For example, let the density $g$ be given by 
$$
g(\mathbf{y}) = \left\{ \begin{array}{ll} 1/2 & \textrm{if $y_1 \leq y_2$} \\ 3/2 & \textrm{if $y_1 > y_2,$} \end{array} \right. \qquad \textrm{for~} \mathbf{y}=(y_1,y_2) \in [0,1]^2.
$$
Thus $g=1/2$ on the upper left half of the unit square divided by the diagonal linking $\mathbf{0}$ to~$\mathbf{1}$, and $g=3/2$ on the lower right half. Clearly $g$ is a probability density, and $g$ is positive. Note that $g$ cannot be written as the product of two one-dimensional densities. Constructing $g_1$ and $g_2$ in the way described above, we get
\begin{eqnarray*}
g_1(y_1) & = & \int_0^1 g(y_1,y_2)~dy_2 = (3/2)y_1 + (1/2)(1-y_1) = y_1 + \frac{1}{2}\\
g_2(y_2|y_1) & = & \frac{g(y_1,y_2)}{g_1(y_1)} = \left\{ \begin{array}{ll} \frac{1}{1+2y_1} & \textrm{if $y_1 \leq y_2$} \\ \frac{3}{1+2y_1} & \textrm{if $y_1 > y_2.$} \end{array} \right.
\end{eqnarray*}
Accordingly, we get
\begin{eqnarray*}
G_1(y_1) & = & \frac{y_1^2+y_1}{2},\\
G_2(y_2|y_1) & = & \left\{ \begin{array}{ll} \frac{y_2+2y_1}{1+2y_1} & \textrm{if $y_1 \leq y_2$} \\ \frac{3y_2}{1+2y_1}  & \textrm{if $y_1 > y_2$} \end{array} \right.
\end{eqnarray*}

Let $\mathbf{x}$ be the point $\mathbf{x}=(x_1,x_2)=(56/81,20/23)$. The transformed point is $\mathbf{z} =T \mathbf{x} = (7/9,20/27)$, since 
$$
G_1\left(\frac{7}{9}\right)=\frac{56}{81} \qquad \textrm{and} \qquad G_2\left(\frac{20}{27}\Big|\frac{7}{9}\right)=\frac{20}{23}.
$$
Let $\mathbf{a}=(1,8/10)$. For the function $\tilde{G}$ defined in~\eqref{tildeg} we have $\tilde{G}(1,8/10)=(1,8/10)$. According to~\eqref{tildeg2} we should have
$$
\mathds{1}_{[\mathbf{0},\mathbf{a}]} (\mathbf{z}) = \mathds{1}_{\left[\mathbf{0},\tilde{G}(\mathbf{a})\right]} (\mathbf{x}).
$$
However, instead we have 
$$
\mathds{1}_{[\mathbf{0},\mathbf{a}]} (\mathbf{z}) = \mathds{1}_{\left[\mathbf{0},\left(1,\frac{8}{10}\right)\right]} \left(\left(\frac{7}{9},\frac{20}{27}\right)\right) = 1,
$$ 
but 
$$
\mathds{1}_{\left[\mathbf{0},\tilde{G}(\mathbf{a})\right]} (\mathbf{x})=\mathds{1}_{\left[\mathbf{0},\left(1,\frac{8}{10}\right)\right]} \left(\left(\frac{56}{81},\frac{20}{23}\right)\right)=0.
$$
Thus~\eqref{tildeg} fails to hold. In a similar way we can show that the claim~\eqref{tildeg3} is also false. We have
$$
\mu_G([\mathbf{0},\mathbf{a}]) = \int_{\left[\mathbf{0},\left(1,\frac{8}{10}\right)\right]} g(\mathbf{y})d\mu_G = \frac{22}{25} = 0.88,  \qquad \textrm{but} \qquad \lambda([\mathbf{0},\tilde{G}(\mathbf{a})]) = \frac{8}{10}.
$$
Now it is not surprising anymore that~\eqref{discrch} is also false. After some calculations we get
$$
D_N^*(\mathbf{z};\mu_G)=\mu_G\left(\left[\mathbf{0},\left(1,\frac{20}{27}\right)\right]\right) = \frac{610}{729} \approx 0.84,
$$
but
$$
D_N^*(\mathbf{x}) = \lambda\left(\left[\mathbf{0},\left(1,\frac{20}{23}\right)\right]\right)=\frac{20}{23} \approx 0.87.
$$
As already mentioned, the error is caused by ignoring the fact that the function $G_2(y_2)=G_2(y_2|y_1)$ depends on $y_1$ (and not only on $y_2$), and by the incorrect assumption that the image of an axis-parallel box under $\tilde{G}$ is again an axis-parallel box. The following picture shows, for the example given above, the set $A = \left\{\tilde{G}(\mathbf{y}):~\mathbf{y} \in [\mathbf{0},\mathbf{a}]\right\}$, the set $B=[\mathbf{0},\tilde{G}(\mathbf{a})]$, and the point $\mathbf{x}$. Note that the sets $A$ and $B$ do not coincide, and that $A$ is not an axis-parallel box.\\

\begin{figure}[H]
\begin{center}
\includegraphics[angle=0,width=80mm]{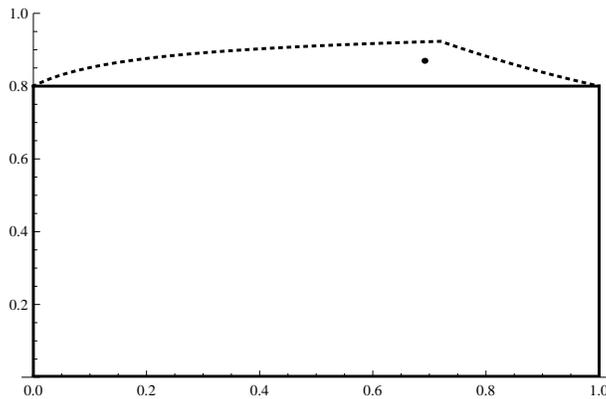}\\
\caption{The sets $A$ (everything below the dotted line) and $B$ (everything below the solid line), and the point $\mathbf{x}$. The point $\mathbf{x}$ is contained in $A$, which implies that $\mathbf{z}$ is contained in $[\mathbf{0},\mathbf{a}]$. However, $\mathbf{x}$ is not contained in $B$.}
\end{center}
\end{figure}

Chelson's generalized Koksma--Hlawka inequality is mentioned several times in the literature; it is explicitly stated, in differing forms, in~\cite{hk,hkt,okter,ra,rg,rna,sl,sm}. In some of these instances it is stated without the transformation procedure and without using the incorrect identity~\eqref{discrch} (which means that in those cases it is stated roughly in the same form as our Theorem~\ref{th1} or Corollary~\ref{th3}). In other cases, the transformation from uniform distribution to an other distribution and the (incorrect) identity~\eqref{discrch} are explicitly accentuated. For example, after stating Chelson's theorem in its original form, in~\cite{sm} the authors write 
\begin{quote}
It is important to note in [the statement of Chelson's theorem] that even though the sampling technique has changed the sequence used to evaluate [the function], the discrepancy appearing in the error bound is still that of the original sequence.
\end{quote}
A generalization of Chelson's results is apparently contained in the PhD thesis of Maize~\cite{maize}, which we cannot access (but judging from the statement of~\cite[Theorem 4.2]{sm}, it probably suffers from the same problem as Chelson's original results). In~\cite{okter} a generalization of Chelson's results is ``proved'', repeating  Chelson's errors. Actually, in~\cite{okter} the author proves the analogue of~\eqref{discrch} only in the one-dimensional case (in which it is true) and writes that \emph{the generalization to higher dimensions is straightforward} (which is actually not the case). It seems that the first time that a correct version of~\eqref{discrch} is stated in the literature is in~\cite[Theorem 5]{rg}, where it is assumed that the measure $\mu$ has a density which is the product of $d$ one-dimensional densities. Strangely, in~\cite{rg} this result is attributed to~\cite{okter}, where the additional assumption that $\mu$ is of product form does not appear (and the result is stated in 
an incorrect form, as noted above). However, it is easy to see that the assumption that $\mu$ possesses a density is not necessary in order to assure that the $\mu$-discrepancy of the transformed point set is dominated by the discrepancy of the original point set when $\mu$ is of product form. Thus a general correct version of~\eqref{discrch} would look as follows.

\begin{theorem} \label{thinv}
Let $\mu$ be a normalized Borel measure on $[0,1]^d$, and let $G(\mathbf{x})$ be its distribution function. Assume that there exist $d$ one-dimensional normalized Borel measures $\mu_1,\dots,\mu_d$ having distribution functions $G_1,\dots,G_d$ such that 
$$
G(x_1,\dots,x_d) = \prod_{s=1}^d G_s(x_s).
$$
Let 
$$
G_s^{-1}(y) = \min \{y \in [0,1]:~G_s(x) \geq y \}, \qquad y \in [0,1],
$$
be the (pseudo-)inverse of $G_s$ for $s=1,\dots,d$. Let $T$ be the transformation on $[0,1]^d$ which maps a point $\mathbf{x} \in [0,1]^d$ to
$$
T(\mathbf{x}) = T(x_1,\dots,x_d) = \left( G_1^{-1}(x_1),\dots,G_d^{-1}(x_d)\right).
$$
Let $\mathbf{x}_1, \dots, \mathbf{x}_N$ be a set of points in $[0,1]^d$. Then
\begin{equation} \label{thconcl}
D_N^* \left(T(\mathbf{x}_1),\dots,T(\mathbf{x}_N);\mu \right) \leq D_N^*(\mathbf{x}_1,\dots,\mathbf{x}_N).
\end{equation}
If all the functions $G_1,\dots,G_d$ are invertible, then in~\eqref{thconcl} we have equality.
\end{theorem}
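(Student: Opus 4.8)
The plan is to show that, under the product assumption on $\mu$, the identities \eqref{tildeg2} and \eqref{tildeg3} that failed in Chelson's general argument become correct, and that the supremum defining the $\mu$-discrepancy of the transformed points then ranges only over a subcollection of the anchored boxes entering the ordinary discrepancy of the original points. The single technical ingredient I would isolate first is the Galois connection between a one-dimensional distribution function and its pseudo-inverse: for each $s$ and all $u,t \in [0,1]$,
\[
G_s^{-1}(u) \leq t \quad \Longleftrightarrow \quad u \leq G_s(t).
\]
I would prove this directly from the (corrected) definition $G_s^{-1}(u) = \min\{x \in [0,1]:~G_s(x) \geq u\}$: the minimum is attained because $G_s$ is non-decreasing and right-continuous, so $G_s(G_s^{-1}(u)) \geq u$, and the two implications follow at once from monotonicity of $G_s$. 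The decisive point, in contrast with the non-product situation discussed above, is that here each $G_s$ depends only on its own coordinate, so the conditioning that invalidated Chelson's reasoning is simply absent.

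Writing $\tilde{G}(\mathbf{a}) = (G_1(a_1),\dots,G_d(a_d))$ as in \eqref{tildeg} and applying the Galois connection coordinatewise, I would obtain, for every $\mathbf{a},\mathbf{x} \in [0,1]^d$, the corrected version of \eqref{tildeg2},
\[
\mathds{1}_{[\mathbf{0},\mathbf{a}]}(T(\mathbf{x})) = \mathds{1}_{[\mathbf{0},\tilde{G}(\mathbf{a})]}(\mathbf{x}).
\]
On the measure side, the product hypothesis gives the corrected version of \eqref{tildeg3}, namely $\mu([\mathbf{0},\mathbf{a}]) = \prod_{s=1}^d G_s(a_s) = \lambda([\mathbf{0},\tilde{G}(\mathbf{a})])$. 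Combining the two yields, for each $\mathbf{a}$, the exact equality of local discrepancies
\[
\frac{1}{N}\sum_{n=1}^N \mathds{1}_{[\mathbf{0},\mathbf{a}]}(T(\mathbf{x}_n)) - \mu([\mathbf{0},\mathbf{a}]) = \frac{1}{N}\sum_{n=1}^N \mathds{1}_{[\mathbf{0},\tilde{G}(\mathbf{a})]}(\mathbf{x}_n) - \lambda([\mathbf{0},\tilde{G}(\mathbf{a})]).
\]

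Taking absolute values and the supremum over $\mathbf{a} \in [0,1]^d$ then proves \eqref{thconcl}. As $\mathbf{a}$ ranges over $[0,1]^d$, the point $\tilde{G}(\mathbf{a})$ ranges only over $\prod_{s=1}^d \mathrm{range}(G_s) \subseteq [0,1]^d$, so the supremum defining $D_N^*(T(\mathbf{x}_1),\dots,T(\mathbf{x}_N);\mu)$ is a supremum of the same quantities over a subset of the anchored boxes appearing in $D_N^*(\mathbf{x}_1,\dots,\mathbf{x}_N)$, hence no larger. For the equality statement, if every $G_s$ is invertible then each $G_s$ is a continuous increasing bijection of $[0,1]$, so $\tilde{G}$ maps $[0,1]^d$ onto $[0,1]^d$; the two suprema are then over identical collections of anchored boxes and must coincide.

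I do not expect a genuine obstacle here; the argument is short once the pseudo-inverse lemma is in place. The only points needing care are the correct one-sided, right-continuous formulation of $G_s^{-1}$ so that the Galois equivalence holds with closed inequalities, and the observation that the loss of equality in \eqref{thconcl} is accounted for precisely by the failure of $\tilde{G}$ to be surjective when some $G_s$ has a jump, so that $\mathrm{range}(G_s)$ is a proper subset of $[0,1]$.
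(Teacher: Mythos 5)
Your proposal is correct and follows essentially the same route as the paper's own proof: your map $\tilde{G}$ is the paper's $\tilde{T}$, and in both arguments the local discrepancy of the transformed points at a box $[\mathbf{0},\mathbf{a}]$ with respect to $\mu$ is identified, via the product structure, with the local discrepancy of the original points at an anchored box of Lebesgue measure $\prod_{s=1}^d G_s(a_s)$, after which the inequality follows because these boxes form a subcollection of $\mathcal{A}^*$, with equality when every $G_s$ is a bijection. If anything, your explicit Galois-connection lemma $G_s^{-1}(u)\leq t \Leftrightarrow u\leq G_s(t)$ is more careful than the paper, which phrases the key step as ``$\tilde{T}(A)$ is again an element of $\mathcal{A}^*$'' --- read literally as the image set this can fail when some $G_s$ has jumps or $G_s(0)>0$ --- whereas your formulation via $\{\mathbf{x}:~T(\mathbf{x})\in A\}=[\mathbf{0},\tilde{G}(\mathbf{a})]$ (and your correction of the typo in the definition of $G_s^{-1}$) is exactly what the argument needs.
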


\begin{proof}
We define a function $\tilde{T}$ which maps $\mathbf{y}=(y_1,\dots,y_d)$ to $\tilde{T}(\mathbf{y}) = (G_1(y_1),\dots,G_d(y_d))$. Let $A \in \mathcal{A}^*$ be given, and let $\tilde{T}(A)$ denote the set $\{\tilde{T}(\mathbf{x}):~ \mathbf{x} \in A\}$. Note the important fact that $\tilde{T}(A)$ is again an element of $\mathcal{A}^*$. We have
\begin{eqnarray*}
\frac{1}{N} \sum_{n=1}^N \underbrace{\mathds{1}_{A}(T(\mathbf{x}_n))}_{=\mathds{1}_{\tilde{T}(A)}(\mathbf{x}_n)} - \underbrace{\mu (A)}_{=\prod_{s=1}^d G_s(a_s)} = \frac{1}{N} \sum_{n=1}^N \mathds{1}_{\tilde{T}(A)}(\mathbf{x}_n) - \lambda(\tilde{T}(A)).
\end{eqnarray*}
Taking absolute values and the supremum over all sets $A \in \mathcal{A}^*$, inequality~\eqref{thconcl} follows.\\

If all functions $G_1,\dots,G_d$ are invertible, then the mapping $A \mapsto T(A)$ is a bijection from the class $\mathcal{A}^*$ to itself, and $\tilde{T}$ is the inverse of $T$. Thus in this case the suprema $\sup_{A \in \mathcal{A}^*}$ and $\sup_{\tilde{T}(A):~A \in \mathcal{A}^*}$ coincide, which implies that in~\eqref{thconcl} there must be equality.
\end{proof}


\def\cprime{$'$}

\end{document}